\documentclass[reqno,11pt]{amsart}

\usepackage{amssymb,amsfonts,amsmath,amsthm,setspace}
\usepackage{fullpage}


\newtheorem{theorem}{Theorem}[section]
\newtheorem{corollary}[theorem]{Corollary}
\newtheorem{lemma}[theorem]{Lemma}
\newtheorem{proposition}[theorem]{Proposition}

\theoremstyle{definition}
\newtheorem{definition}[theorem]{Definition}

\newtheorem{example}[theorem]{Example}

\theoremstyle{remark}
\newtheorem{remark}[theorem]{Remark}


\newcommand{\N}{\mathbb{N}}
\newcommand{\Q}{\mathbb{Q}}
\newcommand{\R}{\mathbb{R}}
\renewcommand{\d}{{~\mathrm d}}
\def\esup{\mathop\mathrm{ess\,sup\,}}
\newcommand{\supp}{\mathop{\rm supp}\nolimits}
\newcommand{\spt}{\mathop{\rm spt}\nolimits}
\newcommand{\wstar}{\stackrel{*}{\rightharpoonup}}
\newcommand{\Pb}{\mathcal P}
\newcommand{\restr}[1]{\lower3pt\hbox{$_{|{#1}}$}}
\newcommand{\iub}{{\bf \underline{i}}}
\newcommand{\jj}{\mathbf{j}}
\newcommand{\xbar}{\overline{x}}
\newcommand{\mcal}{\mathcal M}

\title[Sufficiency of $c$-cyclical monotonicity]{Sufficiency of $c$-cyclical monotonicity in a class of multimarginal optimal transport problems}
\author{Luigi De Pascale}
\address{Dipartimento di Matematica e Informatica, Universit\`a di Firenze \\ Viale Morgagni 67/a, 50134 Firenze \\ Italy}
\email{luigi.depascale@unifi.it}
\author{Anna Kausamo}
\address{Dipartimento di Matematica e Informatica, Universit\`a di Firenze \\ Viale Morgagni 67/a, 50134 Firenze \\ Italy}
\email{akausamo@gmail.com}

\begin{document}
\maketitle

\begin{abstract}
$c$-cyclical monotonicity is the most important optimality condition for an optimal transport plan. While the proof of necessity is relatively easy, the proof of sufficiency is 
often more difficult or even elusive. 
We present here a new approach and we show how known results are derived in this new framework and how this approach allows to prove sufficiency in situations previously not treatable. 
\end{abstract}

\section{Introduction}
\subsection{Optimal transport problems}

Consider $N\ge 2$  Polish probability spaces  $(X^1,\mu^1),\ldots,\\ (X^N,\mu^N)$ and $X=\prod_{i=1}^NX^i$. Let $c:X\to [0,\infty]$ and consider the  \emph{Multi-marginal optimal transport} (MOT) problems 
\[
 \min_{\gamma \in \Pi(\mu^1,\ldots,\mu^N)}C[\gamma]:= \min_{\gamma \in \Pi(\mu^1,\ldots,\mu^N)}\int_X c d\gamma,\tag{P}
\]
and
\[
 \min_{\gamma \in \Pi(\mu^1,\ldots,\mu^N)}C_\infty [\gamma]:= \min_{\gamma \in \Pi(\mu^1,\ldots,\mu^N)} \gamma-\esup_{(x^1,\ldots,x^N)\in X} c,\tag{P$_\infty$}
\]
 in the set 
\[\Pi(\mu^1,\ldots,\mu^N):=\{\gamma\in\mathcal{P}(X)~|~\pi^i(\gamma)=\mu^i\text{ for all }i=1,\ldots,N\},\]
that is, in the set of \emph{couplings} or \emph{transport plans} between the $N$ marginals $\mu^1,\ldots,\mu^N$. 
We refer to the second problem as {\it the $\sup$ case.} The first of these problems is widely encountered in the literature of the last thirty years. The second, although also old, gained popularity only more recently thanks to the applications of optimal transportation in machine learning (see, for example \cite{peyre2019computational}).

This paper is concerned with an optimality condition for the problems above, introduced in the next subsection. 
In particular we will study the sufficiency of such optimality condition. We will give a new, easier, and in our opinion easier-to-understand proof of some known results, and we will show that this new approach allows to extend sufficiency results to a wider setting.  

\subsection{$c$-cyclical monotonocity, $\infty$-$c$-cyclical monotonicity, and the main theorem} 
In this context the $c$-cyclical monotonicity takes the following form.

\begin{definition}\label{cm}
We say that a set $\Gamma\subset\prod_{i=1}^NX^i$ is $c$-cyclically monotone (CM), if for every $k$-tuple of points $(x^{1,i},\ldots, x^{N,i})_{i=1}^k$ and every $(N-1)$-tuple of permutations $(\sigma^2,\ldots,\sigma^N)$ of the set $\{1,\ldots, k\}$ we have
\[\sum_{i=1}^kc(x^{1,i} ,x^{2,i},\ldots,x^{N,i})\le \sum_{i=1}^kc(x^{1,i},x^{2,\sigma^2(i)},\ldots,x^{N, \sigma^N(i)})\,.\]
We also say that $\gamma\in \Pi(\mu^1,\ldots,\mu^N)$ is  $c$-cyclically monotone if it is concentrated on a $c$-cyclically monotone set. 
\end{definition}

\begin{definition}\label{icm}
We say that a set $\Gamma\subset\prod_{i=1}^NX^i$ is infinitely $c$-cyclically monotone (ICM), if for every $k$-tuple of points $(x^{1,i},\ldots, x^{N,i})_{i=1}^k$ and every $(N-1)$-tuple of permutations $(\sigma^2,\ldots,\sigma^N)$ of the set $\{1,\ldots, k\}$ we have
	\[\max\{c(x^{1,i} ,x^{2,i},\ldots,x^{N,i})~|~i\in \{1,\ldots,k\}\}\le \max\{c(x^{1,i},x^{2,\sigma^2(i)},\ldots,x^{N, \sigma^N(i)})~|~i\in \{1,\ldots,k\}\}\,.\]
	We also say that a coupling $\gamma\in \Pi(\mu^1,\ldots,\mu^N)$ is infinitely cyclically monotone if it is concentrated on an ICM set. 
\end{definition}

We will use the expression $c$-cyclically monotone for both conditions above.
The main theorem of this paper is the following

\begin{theorem}\label{icmoptimal}
	Let  $\mu^i \in \Pb(X^i)$ with compact support for $i=1,\dots,N$, let $c:X \to \R\cup\{+\infty\}$ be continuous. If $\gamma\in\Pi(\mu^1,\ldots,\mu^N)$ is an ICM plan for $c$, then $\gamma$ is optimal. 
\end{theorem}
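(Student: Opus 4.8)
The natural strategy for proving sufficiency of a cyclical-monotonicity condition is the standard one: suppose for contradiction that an ICM plan $\gamma$ is not optimal, so there exists a competitor $\gamma' \in \Pi(\mu^1,\ldots,\mu^N)$ with strictly smaller cost, i.e. $\gamma'\text{-}\esup c < \gamma\text{-}\esup c =: \ell$. The idea is to extract from $\gamma'$ a finite configuration of points that, when recombined against an ICM set on which $\gamma$ is concentrated, violates the ICM inequality. Concretely, fix a compact set $\Gamma$ which is ICM and carries $\gamma$, and choose $\ell'$ with $\gamma'\text{-}\esup c < \ell' < \ell$; then the ``bad'' set $B = \{x \in X : c(x) > \ell'\}$ is open, has $\gamma(B) > 0$, but $\gamma'(B) = 0$.

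**Discretization.** To turn this into a contradiction with the finite-tuple condition in Definition~\ref{icm}, I would approximate both plans by finitely supported measures. Partition (a neighborhood of) the supports of the $\mu^i$ into small cells, so that the oscillation of $c$ on each product of cells is smaller than $\min\{\ell-\ell', \text{something}\}$; push $\gamma$ and $\gamma'$ forward to atomic measures $\gamma_n, \gamma_n'$ on representative points of the cells, retaining the same marginals at the discrete level. By the choice of partition, $\gamma_n$ still ``sees'' cost close to $\ell$ on a set of positive mass while $\gamma_n'$ stays below $\ell'$ plus the oscillation. Now both $\gamma_n$ and $\gamma_n'$ have the same discrete marginals (finitely many atoms with rational-or-real weights), so by a Birkhoff–von Neumann / marriage-type argument $\gamma_n'$ can be written as a finite combination of ``rearrangements'' of the atoms of $\gamma_n$ — that is, the atoms of $\gamma_n'$ are obtained from those of $\gamma_n$ by permuting the second through $N$-th coordinates within groups of equal-mass atoms. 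Each such rearrangement is exactly the operation $(\sigma^2,\ldots,\sigma^N)$ appearing in the ICM definition. Since $\gamma_n$ charges the high-cost region, one of the original tuples has $c$-value $\geq \ell - \text{osc}$, while every rearranged tuple appearing in $\gamma_n'$ has $c$-value $\leq \ell' + \text{osc}$; for $n$ large this contradicts ICM applied to that $k$-tuple.

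**Main obstacle.** The delicate point is the combinatorial decomposition: writing the competitor $\gamma_n'$ as an honest list of $N$-tuples that is a coordinatewise permutation of the list coming from $\gamma_n$, with matching multiplicities, so that Definition~\ref{icm} applies verbatim. With $N$ marginals and possibly incommensurable weights this requires first reducing to a common rational grid of weights (splitting atoms), then invoking that any doubly-stochastic-type array with prescribed marginals lies in the convex hull of permutation arrays — but in the multimarginal setting the relevant polytope is not simply the Birkhoff polytope, so one must be a little careful and may need to argue marginal-by-marginal or pass through a graph-theoretic matching. A second, more technical nuisance is handling the value $+\infty$ of $c$ and ensuring the essential supremum with respect to the atomic measures converges to the one for $\gamma$; compactness of supports and continuity of $c$ (hence uniform continuity, or at least uniform continuity on the region where $c$ is finite and a separate treatment of $\{c=+\infty\}$) are what make the oscillation estimates work, and I expect the proof to lean on exactly these hypotheses. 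Once the decomposition is in hand, the contradiction is immediate, so the combinatorial lemma is where the real work — and most likely an auxiliary lemma in the paper — resides.
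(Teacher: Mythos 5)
Your overall route---suppose a strictly better competitor $\gamma'$ exists, discretize $\gamma$ and $\gamma'$ on a common grid, and contradict Definition~\ref{icm} directly---is genuinely different from the paper's, and it can be made to work; but as written it has one real gap and two glossed-over points. The gap is the combinatorial step: you invoke a ``Birkhoff--von Neumann / marriage-type'' decomposition of $\gamma_n'$ into rearrangements of the atoms of $\gamma_n$, and you rightly suspect trouble, because for $N\ge 3$ the polytope of arrays with prescribed marginals is \emph{not} the convex hull of permutation arrays, so the decomposition you appeal to simply is not available. Fortunately it is also not needed: the statement that suffices is much weaker and elementary---after reducing to rational (hence, after scaling, integer) weights, list every atom with its multiplicity as a row of a table; equality of the $k$-th marginals says the $k$-th columns of the two tables coincide as multisets, so the table of $\gamma_n'$ is obtained from that of $\gamma_n$ by permuting each of the columns $2,\dots,N$ separately. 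This is exactly Lemma~\ref{permutations} of the paper, and the reduction from real to rational weights (preserving equal marginals) is the integer-kernel approximation carried out in the proof of Proposition~\ref{icmfinitelyoptimal}; your sketch leaves both unproved, and for the sup cost you must also note that perturbing the weights without changing the supports does not change either essential supremum. The two glossed-over points: (i) if you discretize by sending each cell to a common representative point (which is what makes the discrete marginals of $\gamma_n$ and $\gamma_n'$ literally equal), the atoms of $\gamma_n$ need not lie in the ICM set, so before applying Definition~\ref{icm} you must replace each atom by a point of the ICM set inside the same (positively charged) product cell, paying one more cell-diameter; (ii) since $c$ may take the value $+\infty$, the ``oscillation smaller than $\ell-\ell'$ on every cell'' requirement is unattainable near $\{c=+\infty\}$; in the sup case the clean fix is to drop oscillation estimates altogether and use that $\{c>\ell'\}$ and $\{c<\ell'\}$ are open, together with a Lebesgue-number argument on the compact set $\spt\gamma'$, so that for small cells every rearranged tuple has cost below $\ell'$ while some original tuple has cost above $\ell'$.

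For comparison, the paper does not argue by contradiction at all: it first shows that an ICM plan is \emph{finitely optimal} (Proposition~\ref{icmfinitelyoptimal}, whose engine is the same Lemma~\ref{permutations} you need), then discretizes only $\gamma$ by the dyadic construction of Section~\ref{dyaddecomp} (Proposition~\ref{discretegamma}, Remark~\ref{discretegamma2}), so each $\alpha^n$ is a minimizer of the discrete functional $\mathcal F_n$, and finally transfers optimality to the limit via the equi-coercive $\Gamma$-convergence $\mathcal F_n\to\mathcal F$ (Proposition~\ref{gammaconv} and Theorem~\ref{convminima}); the discretization of an arbitrary competitor, which you perform by hand, is hidden in the $\Gamma$-limsup construction. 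Your direct argument, once the matching lemma and the points above are supplied, is shorter and avoids the $\Gamma$-convergence machinery; the paper's formulation buys a uniform treatment of both $(P)$ and $(P_\infty)$ and reusable approximation statements. So: right strategy, but the crux you delegate to Birkhoff--von Neumann must instead be the elementary column-matching lemma plus the rational-weight reduction, and the $+\infty$ and ``atoms not in the ICM set'' issues need the explicit fixes indicated above.
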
	

Along the way we will give a new proof of the following known result due, in a more general setting, to Griesler \cite{griessler2018}.

\begin{theorem}\label{ccmoptimal}
	Let  $\mu^i \in \Pb(X^i)$ with compact support for $i=1,\dots,N$, let $c:X \to \R$ be continuous. If $\gamma\in\Pi(\mu^1,\ldots,\mu^N)$ is an CCM plan for $c$, then $\gamma$ is optimal. 
\end{theorem}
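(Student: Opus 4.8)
The plan is to show directly that $C[\gamma]\le C[\gamma']$ for an arbitrary competitor $\gamma'\in\Pi(\mu^1,\ldots,\mu^N)$, by reducing the comparison to a purely \emph{cyclic} one that the CCM property of $\gamma$ controls. First I would carry out some harmless reductions. Since $c$ is continuous, the inequality defining $c$-cyclical monotonicity in Definition~\ref{cm} is closed, so the closure of a CM set is CM; as $\gamma$ is automatically concentrated on the compact set $K:=\prod_{i=1}^N\spt\mu^i$, we may replace $\Gamma$ by $\overline{\Gamma}\cap K$ and thus assume $\gamma$ is concentrated on a compact CM set $\Gamma\subseteq K$. By continuity and compactness $c$ is bounded on $K$, and every competitor $\gamma'$ is also concentrated on $K$, so all the quantities below are finite.

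The core is a \emph{cyclic exchange construction}. Using the disintegration theorem on Polish spaces, I would build, for each $n\in\N$, a probability measure on a suitable finite power of $X$ whose law encodes an $n$-tuple of random configurations $\omega_1,\ldots,\omega_n\in X$ together with \emph{fixed} permutations $\sigma^2,\ldots,\sigma^N$ of $\{1,\ldots,n\}$, such that: (i) each $\omega_i$ is distributed as $\gamma$, hence $(\omega_1,\ldots,\omega_n)\in\Gamma^n$ almost surely; and (ii) the rearranged configuration $\tilde\omega_i:=(\omega_i^1,\omega_{\sigma^2(i)}^2,\ldots,\omega_{\sigma^N(i)}^N)$ is distributed as $\gamma'$ for all indices $i$ outside an exceptional set of cardinality $o(n)$ (the exceptional indices being those forced by ``closing the cycle''). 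Applying the CCM inequality pointwise to $(\omega_i)_{i=1}^n$ and $(\sigma^2,\ldots,\sigma^N)$ gives $\sum_{i=1}^n c(\omega_i)\le\sum_{i=1}^n c(\tilde\omega_i)$ almost surely. Taking expectations, using (i), (ii) and $|c|\le\|c\|_{L^\infty(K)}$ on the $o(n)$ exceptional terms, yields $n\,C[\gamma]\le (n-o(n))\,C[\gamma']+o(n)\,\|c\|_{L^\infty(K)}$; dividing by $n$ and letting $n\to\infty$ gives $C[\gamma]\le C[\gamma']$.

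I expect step (ii) to be the main obstacle. For $N=2$ it is classical and transparent: one alternates between $X^1$ and $X^2$ using the disintegrations of $\gamma$ and $\gamma'$ along their marginals, producing a stationary Markov chain for which a single cyclic shift does the job, with just one exceptional ``closing'' edge. For $N\ge 3$ the conditional structures of $\gamma$ and $\gamma'$ with respect to different coordinates are no longer compatible, and a naive single cyclic chain fails --- it would force, e.g., a two-coordinate marginal of $\gamma$ to coincide with that of $\gamma'$. I see two routes to fix this, either of which I would develop carefully: (a) \emph{discretization} --- approximate each $\mu^i$ by finitely supported measures and discretize $\gamma$ and $\gamma'$ \emph{jointly}, reducing (ii) to a finite combinatorial flow decomposition of the zero-marginal signed measure $\gamma'-\gamma$ into elementary cyclic differences whose negative part lies in $\Gamma$; here the delicate point is that discretization destroys $c$-cyclical monotonicity, so the resulting error must be controlled quantitatively via the uniform continuity of $c$ on $K$; or (b) \emph{composed gluings} --- realize the passage from $\gamma$ to $\gamma'$ as a composition of finitely many elementary exchanges, each touching a controlled subset of coordinates, and chain the corresponding gluings so that the exceptional set in (ii) remains of size $o(n)$. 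In both routes the compact-support hypothesis is what keeps all the measures tight and $c$ uniformly controlled, and is exactly what legitimizes the limit $n\to\infty$.
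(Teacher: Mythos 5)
Your argument, as written, has a genuine gap at exactly the point you flag yourself: step (ii), the construction of $n$ configurations distributed as $\gamma$ together with fixed permutations $\sigma^2,\ldots,\sigma^N$ whose rearrangement is (up to $o(n)$ exceptional indices) distributed as an arbitrary competitor $\gamma'$. For $N\ge 3$ you concede that the alternating/Markov-chain gluing breaks down, and you only indicate two possible repairs, (a) joint discretization plus a ``flow decomposition'' of $\gamma'-\gamma$ into cyclic differences, or (b) composed gluings, without carrying either one out. But this missing construction \emph{is} the theorem: everything before and after it (applying the CCM inequality pointwise, taking expectations, dividing by $n$) is routine. In particular, the combinatorial statement you would need in route (a) --- that two finitely supported measures with the same $N$ marginals are related by $(N-1)$ permutations of the columns --- is not something you can cite as standard in the multimarginal setting; it has to be proved (in this paper it is Lemma \ref{permutations}, proved by counting multiplicities in each marginal, and it only applies after a rational-approximation step for non-rational weights, handled in Proposition \ref{icmfinitelyoptimal} via $\overline{Ker_\Q\mathcal A}=Ker_\R\mathcal A$). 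As it stands, your text is a plan with the hardest step deferred, not a proof.

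It may help to compare with how the paper closes this gap, because it suggests your route (a) is workable but should be reorganized. Rather than coupling $\gamma$ and $\gamma'$ directly and tracking an error caused by ``discretization destroying $c$-cyclical monotonicity,'' the paper discretizes only $\gamma$, choosing the atoms of the approximations $\alpha^n$ on $\supp\gamma$ (Proposition \ref{discretegamma} and Remark \ref{discretegamma2}); then CCM applies \emph{exactly} to every finitely supported competitor with the same marginals as $\alpha^n$ (finite optimality, Proposition \ref{icmfinitelyoptimal}, via Lemma \ref{permutations}), so each $\alpha^n$ is a genuine minimizer of the discrete problem with no error term. The comparison with arbitrary $\gamma'\in\Pi(\mu^1,\ldots,\mu^N)$ is then delegated to a $\Gamma$-convergence argument (Proposition \ref{gammaconvint} together with Theorem \ref{convminima}), whose $\Gamma$-limsup construction is where the uniform continuity of $c$ on the compact set $K$ and the quantitative diameter bounds of the partition enter. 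If you want to salvage your expectation/cyclic-shift framework for $N\ge 3$, you will essentially have to prove Lemma \ref{permutations} (or an equivalent decomposition) anyway, and you should expect the error control you describe in route (a) to be cleaner if, as in the paper, the exact CCM inequality is used at the discrete level and all approximation errors are pushed onto the competitor side.
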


We now discuss an important characterization of $c$-cyclically monotone transport plans. To this aim we define

\begin{definition}\label{finop}
	Let $\gamma$ be a positive and finite Borel measure on $X$. We say that $\gamma$ is finitely optimal if all its finitely-supported submeasures are optimal with respect to their marginals. Here by submeasure we mean any probability measure $\alpha$ satisfying $\supp(\alpha)\subset \supp(\gamma)$. 
\end{definition}

\begin{proposition}\label{icmfinitelyoptimal}
	If $\gamma\in\Pi(\mu^1,\ldots, \mu^N)$ is CM or ICM, then it is finitely optimal for the problem $(P)$ or $(P_\infty)$, respectively. 
\end{proposition}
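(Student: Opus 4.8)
The plan is to prove the contrapositive at the level of finitely supported submeasures: if some finitely supported submeasure $\alpha$ of $\gamma$ fails to be optimal for its own marginals, then $\supp(\gamma)$ cannot be CM (resp. ICM). Write $\alpha = \sum_{i=1}^k a_i \delta_{(x^{1,i},\dots,x^{N,i})}$ with $a_i \ge 0$ and $\sum a_i = 1$, and let $\nu^1,\dots,\nu^N$ be its marginals. The first step is a \emph{discretization/rationalization} reduction: since $\alpha$ is not optimal in $\Pi(\nu^1,\dots,\nu^N)$, there is a competitor $\beta$ with strictly smaller cost (strictly smaller $\sup$-cost in the $(P_\infty)$ case); by a density argument in the finite-dimensional transportation polytope one may assume all the weights $a_i$ as well as the weights of $\beta$ are rational with common denominator $m$.

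The second step is the \emph{combinatorial core}: a rational coupling with common denominator $m$ can be represented by "splitting each atom into $m$ equal pieces of mass $1/m$", so that both $\alpha$ and $\beta$ become couplings of the \emph{uniform} measure on $m$-point sets $\{y^{j,1},\dots,y^{j,m}\}$ in each factor $X^j$ (each $y^{j,\ell}$ being a copy of one of the original $x^{j,i}$). A coupling of uniform measures on equal-cardinality finite sets is exactly a doubly stochastic matrix, and by Birkhoff–von Neumann it is a convex combination of permutation matrices; more elementarily, $\alpha$ itself, being such a coupling, can be written — after this splitting — as a list of $m$ points $(y^{1,\ell},\dots,y^{N,\ell})_{\ell=1}^m$ of $\supp(\alpha)\subset\supp(\gamma)$, and the competitor $\beta$ is obtained from the same first-coordinate list by permuting the other coordinates via some $(\sigma^2,\dots,\sigma^N)$. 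Then $\sum_\ell c(y^{1,\ell},\dots,y^{N,\ell}) = m\, C[\alpha]$ while $\sum_\ell c(y^{1,\ell}, y^{2,\sigma^2(\ell)},\dots) = m\, C[\beta] < m\, C[\alpha]$, which directly violates Definition~\ref{cm} on points of $\supp(\gamma)$. In the ICM case one argues identically with $\max$ in place of $\sum$: $\max_\ell c$ along the $\alpha$-list equals the $\alpha$-$\esup$ of $c$, the $\beta$-list gives a strictly smaller max, contradicting Definition~\ref{icm}.

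The last step is to package this as a statement about $\gamma$ rather than about an arbitrary submeasure: since $\gamma$ is CM (resp. ICM) it is concentrated on a set $\Gamma$ which is CM (resp. ICM); one should check that a finitely supported submeasure $\alpha$ with $\supp(\alpha)\subset\supp(\gamma)$ can be taken with atoms in $\Gamma$ (or at least in $\overline{\Gamma}$, using continuity of $c$ and closedness of the CM/ICM conditions under limits of finitely many points), so that the violating configuration of the previous paragraph consists of points of $\Gamma$ — contradiction. Hence every finitely supported submeasure of $\gamma$ is optimal, i.e. $\gamma$ is finitely optimal.

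I expect the main obstacle to be the \emph{rationalization step together with the atom-placement subtlety}: reducing to rational weights is a standard but slightly delicate approximation argument (in the $(P_\infty)$ case one must make sure the approximation strictly preserves the inequality $C_\infty[\beta] < C_\infty[\alpha]$, which is fine since $\esup$ with respect to a measure depends only on the support, not the weights, so the $\sup$-cost is actually insensitive to rationalizing — a small simplification worth exploiting). The second mild technical point is justifying that $\supp(\alpha)\subset\supp(\gamma)$ allows us to assume the points lie on the CM/ICM set $\Gamma$: this uses that $\supp(\gamma)\subset\overline{\Gamma}$ and that the (finite) CM and ICM inequalities pass to closures by continuity of $c$. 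Everything else is bookkeeping: the Birkhoff–von Neumann / "split into $1/m$ pieces" representation is elementary once the weights are rational.
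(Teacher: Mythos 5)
Your proposal is correct and follows essentially the same route as the paper: rationalize the weights of both plans simultaneously (the paper does this by observing that the equal-marginals constraint is the kernel of an integer matrix, so rational solutions are dense in real ones), clear denominators so each atom splits into equal unit pieces, and note (the paper's Lemma \ref{permutations}) that the competitor's list of points is then obtained from $\alpha$'s list by permuting the columns, at which point the CM/ICM inequality gives the cost comparison --- you phrase this contrapositively, the paper argues directly, which is immaterial. The two subtleties you flag, namely preserving equal marginals under rationalization and the insensitivity of the $\sup$-cost to the weights, are exactly the points the paper's proof handles, so nothing essential is missing.
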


\begin{lemma}\label{permutations}
	Let $\alpha=\sum_{i=1}^lm_i\delta_{( x^{1,i},\ldots, x^{N,i})}$ and $\overline\alpha=\sum_{i=1}^{\overline l}\overline{m}_i\delta_{(\xbar^{1,i},\ldots,\xbar^{N,i})}$ be two discrete measures with positive, integer coefficients and the same marginals. Let us denote by $\tilde l=m^1 + \dots +m^l$ the number of rows of the following table 
$$	
\left. \begin{array}{lll}  
x^{1,1} & \dots & x^{N,1} \\
\vdots &   & \vdots \\
x^{1,1} & \dots  & x^{N,1}
\end{array}
\right \rbrace
\begin{array}{l}
\\
m^1 \mbox{- times}\\
\\
\end{array} 
$$
$$ \begin{array}{llllllllll}  
\dots & \dots & \dots & & & & & & & 
\end{array}
$$ 
$$
\left.\begin{array}{lll}  
x^{1,l} & \dots & x^{N,l} \\
\vdots &   & \vdots \\
x^{1,l} & \dots  & x^{N,l}
\end{array}
\right \rbrace
\begin{array}{l}
\\
m^l \mbox{- times}\\
\\
\end{array} 
$$
where the first $m^1$ rows are equal among themselves, the following $m^2$ rows are equal among themselves and so on.
Let $\overline A$ be the analogous table associated to $\overline \alpha$. Then $\overline A$ has $\tilde l$ rows and  there exist $(N-1)$ permutations of the set $\{1,\ldots,\tilde l\}$ such that 
$\overline{A}$ is equal to
$$	
\begin{array}{lll}  
x^{1,1} & \dots & x^{N,\sigma^N(1)} \\
\vdots &   & \vdots \\ 
x^{1,1} & \dots & x^{N,\sigma^N (m_1)}\\
x^{1,2} & \dots & x^{N, \sigma^N(m_1 +1)}\\
\vdots &   & \vdots \\ 
x^{1,l} & \dots & x^{N,\sigma^N(m_1+\dots+m_{l-1}+1)}\\
\vdots &   & \vdots  \\ 
x^{1,l} & \dots & x^{N, \sigma^N (\tilde l)}
\end{array} 
$$
\end{lemma}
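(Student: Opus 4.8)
The plan is to forget the measures for a moment and argue purely combinatorially on the two ``unfolded'' tables, reintroducing them only at the end. First I would record that the table $\overline A$ really has $\tilde l$ rows: since $\pi^1(\alpha)=\pi^1(\overline\alpha)$, comparing total masses gives
$\sum_{i=1}^l m_i=\alpha(X)=\overline\alpha(X)=\sum_{i=1}^{\overline l}\overline m_i=\tilde l$.
Index the rows of both $A$ and $\overline A$ by $\{1,\dots,\tilde l\}$ and write $a^j_s$ (resp.\ $\overline a^j_s$) for the entry of $A$ (resp.\ $\overline A$) in row $s$ and column $j$; note that permuting all rows of $\overline A$ simultaneously does not change the associated measure $\overline\alpha$, so we are free to reorder the rows of $\overline A$ at will.

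The key elementary observation is that, for each fixed $j\in\{1,\dots,N\}$, the equality $\pi^j(\alpha)=\pi^j(\overline\alpha)$ says exactly that the two finite lists $(a^j_1,\dots,a^j_{\tilde l})$ and $(\overline a^j_1,\dots,\overline a^j_{\tilde l})$ coincide as multisets, i.e.\ every point of $X^j$ occurs the same number of times in the $j$-th column of $A$ as in the $j$-th column of $\overline A$. Consequently one can choose, for every $j$, a bijection $\tau^j$ of $\{1,\dots,\tilde l\}$ with $\overline a^j_s=a^j_{\tau^j(s)}$ for all $s$ (if a column has repeated entries there are several admissible $\tau^j$, and we simply fix one).

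It remains to ``normalize with respect to the first column''. Replace $\overline A$ by the row‑reordered table $\overline B$ whose $s$-th row is the $(\tau^1)^{-1}(s)$-th row of $\overline A$; since $\overline B$ and $\overline A$ represent the same measure, it suffices to prove the statement for $\overline B$. Its first column is $\overline b^1_s=\overline a^1_{(\tau^1)^{-1}(s)}=a^1_s$, which is precisely the first column of the table displayed in the statement (the block of $m_1$ copies of $x^{1,1}$, then $m_2$ copies of $x^{1,2}$, and so on). Setting $\sigma^j:=\tau^j\circ(\tau^1)^{-1}$ for $j=2,\dots,N$ gives $N-1$ permutations of $\{1,\dots,\tilde l\}$ with $\overline b^j_s=\overline a^j_{(\tau^1)^{-1}(s)}=a^j_{\tau^j((\tau^1)^{-1}(s))}=a^j_{\sigma^j(s)}$, which is exactly the $j$-th column of the displayed table. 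This establishes the claim.

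I do not expect a genuine obstacle here: the whole content is the translation between a discrete measure with integer weights and the list of its atoms counted with multiplicity, plus the fact that equality of a one-dimensional marginal is equality of the corresponding columns as multisets. The only points requiring a little care are that the $\tau^j$ need not be unique (so one must explicitly fix choices) and that reordering the rows of $\overline A$ must be flagged as harmless; once the global reordering $(\tau^1)^{-1}$ has pinned down the first column, the remaining freedom is precisely the $N-1$ permutations $\sigma^2,\dots,\sigma^N$, matching the form required in Definitions~\ref{cm} and \ref{icm}.
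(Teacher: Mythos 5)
Your proof is correct and follows essentially the same route as the paper: equality of each marginal means the corresponding columns of the two unfolded tables coincide as multisets, which yields the bijections. Your write-up is in fact a bit more careful than the paper's, since you explicitly justify the row count via total mass, fix the bijections $\tau^j$, and handle the harmless row reordering by composing with $(\tau^1)^{-1}$ to normalize the first column—steps the paper leaves implicit.
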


\begin{proof}
	For each $k\in\{1,\ldots, N\}$, the $k$-th marginal of $\alpha$ is given by the sum of the Dirac masses centered on the points of the $k$-th column of the table $A$ with multiplicity. Analogously, the $k$-th marginal of $\overline \alpha$ is given by the sum of the Dirac masses centered on the points of the $i$-th column of the table $\overline A$ with multiplicity. Since the marginals of $\alpha$ and $\overline \alpha$ are the same, each point $x^{k,i}$ appearing in the  $k$-th marginal must appear in both matrices the same number of times, proving the existence of the bijections $\sigma^2,\ldots,\sigma^N$ as required. This also implies that $\overline A$ has $\tilde l$ rows.  
\end{proof}

\begin{proof}{(of Proposition \ref{icmfinitelyoptimal})} We fix a finitely-supported submeasure $\alpha=\sum_{i=1}^l a_i\delta_{X^i}$ of $\gamma$. We need to show that $\alpha$ is an optimal coupling of its marginals. To do this, we fix another coupling, $\overline\alpha= \sum_{i=1}^{\overline l} \overline a _i\delta_{\overline X^i}$, with the same marginals as $\alpha$. We have to show that 
\begin{equation}\label{eq:costclaim}
\tilde C[\alpha]\le \tilde C[\overline \alpha],
\end{equation}
where $\tilde C$ is any of the two costs under consideration. 
Let us first assume that the discrete measures $\alpha$ and $\overline \alpha$ have rational coefficients. We consider the measures $M\alpha$ and $M \overline \alpha$, where $M$ is the product of the denominators of the coefficients of $\alpha$ and $\overline \alpha$. They are discrete measures having positive, integer coefficients and the same marginals, so we can apply Lemma \ref{permutations} to find permutations $\sigma^2,\ldots,\sigma^N$ such that $M \alpha$ and $M \overline \alpha$ have representations $A$ and $\overline A$, respectively. If $\tilde C=C$ we have, using the $c$-cyclical monotonicity of $\alpha$
\[MC[\alpha]=\sum_{i=1}^{\tilde l}  c(x^{1,i},\ldots, x^{N,i})\le \sum_{i=1}^{\tilde l} c(x^{1,i},x^{2,\sigma^2(i)},\ldots, x^{N, \sigma^N(i))})=MC[\overline \alpha],\]
proving the optimality of $\alpha$. 
If $\tilde C=C_\infty$, the conclusion is immediate:
\[C_\infty[\alpha]=\max_{1\le i\le \tilde k}c(x^{1,i},\ldots, x^{N,i})\le \max_{1\le i\le \tilde k}c(x^{1,i},x^{2,\sigma^2(i)},\ldots, x^{N,\sigma^N(i)})= C_\infty[\overline \alpha].\]

Now, assume that $\alpha$ and $\overline \alpha$ have real (not necessarily rational) coefficients,
\[\alpha:= \sum_{i=1}^l a_i \delta_{X^i}, \ \overline \alpha= \sum_{i=1}^{\overline l} \overline a_i \delta_{\overline X ^i}.
\] 
We show that for all $\varepsilon >0$ there exist two discrete measures 
\[\beta:= \sum_{i=1}^l q_i \delta_{X^i} \ \mbox{and} \  \ \overline \beta= \sum_{i=1}^{\overline l} \overline q_i \delta_{\overline X ^i},
\] 
with the same marginals, $q_i, \overline q_i \in \Q$ and 
\[|a_i-q_i| <\varepsilon, \ |\overline a_i-\overline q_i| <\varepsilon.
\]
Being concentrated on $X^1,\dots,X^l$ and $\overline X^1, \dots \overline X^{\overline l}$ is equivalent to the fact that the vector ${\bf \underline a}:=(a_1, \dots, a_l, \overline a_1, \dots, \overline a_{\overline l})$ is a solution of 
\[\mathcal A {\bf \underline a}=0,\]
where $\mathcal A$ is a matrix with coefficients $1, 0, -1$.
Indeed, if we write, for example, the equality between the first two marginals we obtain
\[ \sum_{i=1}^l a_i \delta_{x^{1,i}}= \sum_{i=1}^{\overline l} \overline a_i \delta_{\overline x ^{1,i}}.
\] 
so some of the points $\overline x^{1,i}$ must coincide with, for example, $x^{1,1}$ and this gives, for two sets of indices
\[\sum_{i\in I} a_i =  \sum_{j\in J} \overline a_j.\] 
Since the matrix $\mathcal A$ has integer coefficients
\[\overline{Ker_\Q \mathcal A} = Ker_\R \mathcal A,\] 
and this allows to choose $\beta$ and $\overline \beta$.
Since $C[\alpha]\approx C[\beta]$, $C[\overline \alpha]\approx C[\overline \beta]$, $C_\infty [\alpha]=C_\infty [\beta]$ and $C_\infty[ \overline \alpha]=C[\overline \beta]$, we conclude.
\end{proof} 
	
\section{Preliminary results}
\subsection{Lower semi-continuity, compactness and existence of minimizers}
Existence for the optimal transport problems above is usually obtained by the direct method of the Calculus of Variations. Here we shortly report the tools which we don't find elsewhere or that will be used substantially in our proofs.
A useful convergence on the set of transport plans is the tight convergence.
\begin{definition} Let $X$ be a metric space and let $\gamma_n \in \Pb (X)$ we say that $\gamma_n$ converges tightly to $\gamma$ if for all $\phi \in C_b (X)$
	\[ \int \phi d\gamma_n \to \int \phi d \gamma.\] 
The tight convergence will be denoted by $\wstar$. 
\end{definition}

\begin{definition} Let $\Pi$ be a set of Borel  probability measures on a metric space $X$. We say that $\Pi$ is tight (or uniformly tight) if for all $\varepsilon >0$ there exists $K_\varepsilon \subset X$ compact such that 
	\[\gamma (K_\varepsilon)> 1-\varepsilon \ \mbox{or, equivalently,} \ \gamma (X\setminus K_\varepsilon)\leq\varepsilon \] 
	for all $\gamma \in \Pi$. 
\end{definition}

\begin{theorem}[Prokhorov] Let $X$ be a complete and separable metric space (Polish space). Then $\Pi \subset \Pb (X)$ is tight if and only if it is pre-compact with respect to the tight convergence. 
\end{theorem}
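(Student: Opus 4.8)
The statement is the classical Prokhorov theorem, so the plan is to assemble the standard two-part argument, taking care to record where completeness and separability of $X$ enter. I would prove the two implications separately.

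For ``pre-compact $\Rightarrow$ tight'' I would fix $\varepsilon>0$ and a countable dense set $\{x_j\}_{j\in\N}\subset X$, and first establish that for each $n\in\N$ there is a \emph{finite} $J_n\subset\N$ with $\gamma\bigl(\bigcup_{j\in J_n}B(x_j,1/n)\bigr)>1-\varepsilon 2^{-n}$ for every $\gamma\in\Pi$. This is the step that uses pre-compactness: if it failed for some $n$, I would pick $\gamma_m\in\Pi$ with $\gamma_m\bigl(\bigcup_{j\le m}B(x_j,1/n)\bigr)\le 1-\varepsilon 2^{-n}$, extract a tightly convergent subsequence $\gamma_{m_k}\wstar\gamma$, and reach a contradiction from the fact that the open sets $\bigcup_{j\le m}B(x_j,1/n)$ increase to $X$ (via the portmanteau inequality $\gamma(G)\le\liminf_k\gamma_{m_k}(G)$ for open $G$). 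Then $K_\varepsilon:=\bigcap_{n}\overline{\bigcup_{j\in J_n}B(x_j,1/n)}$ is closed and totally bounded, hence compact by completeness of $X$, and $\gamma(X\setminus K_\varepsilon)\le\sum_n\varepsilon 2^{-n}=\varepsilon$ for every $\gamma\in\Pi$.

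For the converse ``tight $\Rightarrow$ pre-compact'' I would reduce to the case of a compact ambient space. Fix a homeomorphic embedding $\iota$ of the Polish space $X$ into the Hilbert cube $Q=[0,1]^{\N}$ and, given a sequence $(\gamma_n)\subset\Pi$, push it forward to $\tilde\gamma_n:=\iota_{\#}\gamma_n\in\Pb(Q)$, each concentrated on $\iota(X)$. Since $Q$ is compact metric, $\Pb(Q)$ is sequentially compact for the tight topology --- this follows from Banach--Alaoglu together with a diagonal argument over a countable dense subset of the separable space $C(Q)$ --- so I may extract $\tilde\gamma_{n_k}\wstar\tilde\gamma$. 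Tightness enters here: choosing compact $K_m\subset X$ with $\gamma_n(K_m)>1-1/m$ for all $n$, the portmanteau inequality applied to the closed (because compact) set $\iota(K_m)\subset Q$ gives $\tilde\gamma(\iota(K_m))\ge 1-1/m$, whence $\tilde\gamma$ is concentrated on the Borel set $\iota(X)$ and descends to a Borel probability measure $\gamma$ on $X$. Finally I would check $\gamma_{n_k}\wstar\gamma$ in $\Pb(X)$ through the open-set form of the portmanteau theorem: for open $G=\iota^{-1}(U)\subset X$ one has $\gamma_{n_k}(G)=\tilde\gamma_{n_k}(U)$ and $\gamma(G)\le\tilde\gamma(U)\le\liminf_k\tilde\gamma_{n_k}(U)$.

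The main obstacle is exactly this last reduction. Since $\iota(X)$ is in general only a $G_\delta$, not a closed, subset of $Q$, a bounded continuous function on $\iota(X)$ need not extend continuously to $Q$, so one cannot directly pass to the limit in $\int\phi\,d\gamma_{n_k}$ through an extension; the workaround is to argue entirely with the portmanteau characterization by open sets and to use tightness to guarantee that no mass of the limit measure ``escapes'' from $\iota(X)$. (If one is content to quote the theorem, this is entirely standard --- see e.g. Billingsley or Bogachev --- and needs no proof here.)
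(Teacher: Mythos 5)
Your argument is correct, but note that the paper itself offers no proof of this statement: Prokhorov's theorem is quoted there as classical background (the only compactness result the authors actually prove is the subsequent product-space tightness theorem, which merely \emph{uses} Prokhorov), so there is no internal proof to compare against, and your closing parenthetical — that one may simply cite Billingsley or Bogachev — is exactly the paper's stance. What you wrote is the standard textbook proof, and both directions are sound: in ``pre-compact $\Rightarrow$ tight'' you correctly isolate where separability (the countable dense set) and completeness (closed plus totally bounded implies compact) enter, and the contradiction via lower semicontinuity of $\gamma\mapsto\gamma(G)$ on open sets is the right mechanism; in ``tight $\Rightarrow$ pre-compact'' the Hilbert-cube embedding, the use of the closed-set portmanteau inequality on $\iota(K_m)$ to keep the limit's mass on the $G_\delta$ set $\iota(X)$, and the final open-set portmanteau check are all standard and correctly handled — indeed your explicit acknowledgement that $\iota(X)$ need not be closed, so one cannot extend test functions from $\iota(X)$ to $Q$, is precisely the point many sketches gloss over. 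Two small items you implicitly rely on and could state: sequential extraction suffices because the tight (weak) topology on $\Pb(X)$ is metrizable when $X$ is separable, so pre-compactness and sequential pre-compactness coincide; and the compactness of $\Pb(Q)$ needs the Riesz representation theorem to identify the weak$^*$ limit functional produced by Banach--Alaoglu with a Borel probability measure on $Q$. Neither is a gap — both are standard — but naming them would make the dependence on the hypotheses fully transparent.
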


\begin{remark}\hfill
	\begin{enumerate}
	\item The tight convergence is lower semi-continuous on open sets and  upper semi-continuous on closed sets;
	\item If $X$ is complete and separable then if $\Pi$ is a singleton it is always tight.
\end{enumerate}
\end{remark}
The following compactness theorem will be used in this paper.
\begin{theorem} For $i=1, \dots, N$, let $X^i$ be a Polish space. Let $X= X^1 \times \dots \times X^N$. Let $\mcal^i \subset \Pb (X^i)$ be tight for all $i$. Then the set 
	\[\Pi=\{\gamma \in \Pb (X)\ | \ \pi^i_\sharp \gamma \in \mcal^i  \}\]
is tight.
\end{theorem}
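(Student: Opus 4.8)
The plan is to work directly from the definition of tightness, producing for each $\varepsilon>0$ a single compact set $K\subset X$ that works uniformly for all $\gamma\in\Pi$. The natural candidate is a product of compact sets harvested from the tightness of the individual families $\mcal^i$.

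First I would fix $\varepsilon>0$ and, for each $i\in\{1,\dots,N\}$, invoke the tightness of $\mcal^i$ with parameter $\varepsilon/N$: this gives a compact set $K^i\subset X^i$ with $\nu(X^i\setminus K^i)\le \varepsilon/N$ for every $\nu\in\mcal^i$. Then I set $K:=K^1\times\cdots\times K^N$, which is compact in $X$ as a finite product of compact sets.

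Next I would estimate $\gamma(X\setminus K)$ for an arbitrary $\gamma\in\Pi$. The key elementary observation is the set inclusion
\[ X\setminus K \subset \bigcup_{i=1}^N \big( X^1\times\cdots\times (X^i\setminus K^i)\times\cdots\times X^N \big), \]
since a point fails to lie in $K$ precisely when at least one of its coordinates misses the corresponding $K^i$. Taking measures, using subadditivity, the definition of the pushforward, and the hypothesis $\pi^i_\sharp\gamma\in\mcal^i$, I obtain
\[ \gamma(X\setminus K) \le \sum_{i=1}^N \gamma\big( X^1\times\cdots\times (X^i\setminus K^i)\times\cdots\times X^N \big) = \sum_{i=1}^N (\pi^i_\sharp\gamma)(X^i\setminus K^i) \le \sum_{i=1}^N \frac{\varepsilon}{N} = \varepsilon. \]
As this bound is independent of $\gamma\in\Pi$, the family $\Pi$ is tight.

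The argument is essentially routine; the only point requiring slight care is the set-theoretic decomposition of $X\setminus K$ and the bookkeeping of the $\varepsilon/N$ split so that the final bound is exactly $\varepsilon$. No deeper obstacle arises, and this direction does not need Prokhorov's theorem — although, combined with it, the statement yields the pre-compactness of $\Pi$ with respect to tight convergence.
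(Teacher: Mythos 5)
Your proof is correct and follows essentially the same route as the paper: choose compact sets $K^i$ with marginal mass at least $1-\varepsilon/N$, take $K=K^1\times\cdots\times K^N$, and bound $\gamma(X\setminus K)$ via the same slab decomposition and subadditivity. Nothing further is needed.
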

\begin{proof} Let $\varepsilon >0$. By the tightness of $\mcal^i$ we can fix a compact set $K^i \subset X^i$ such that for all $\mu^i \in \mcal^i$
\[\mu^i (X^i \setminus K^i)< \frac\varepsilon N .\]
Let $K= K^1 \times \dots \times K^N$ and let $\gamma \in \Pi$. Since all the marginals $\pi^i_\sharp \gamma \in \mcal ^i$, and since 
\[ X\setminus K \subset ((X^1\setminus K^1 )\times \prod_{k=2}^NX^k)\cup (X^1\times (X^2\setminus K^2 )\times\prod_{k=3}^NX^k)\cup\cdots\cup (\prod_{k=1}^{N-1}X^k\times(X^N\setminus K^N)),
\]
one gets
\[
\gamma(X\setminus K)\leq \varepsilon.
\]
\end{proof}
\begin{corollary}\label{precompatto}
By Prokhorov's theorem a set $\Pi\subset \Pb(X)$ as in the theorem above is pre-compact for the tight convergence. This is, in particular, true if $\mcal^i=\{\mu^i\}$. 
\end{corollary}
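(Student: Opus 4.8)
The statement to be proved is an immediate consequence of the preceding compactness theorem together with Prokhorov's theorem, so the proof is essentially a two-line deduction; the only real task is to make sure the hypotheses line up.

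First I would invoke the preceding theorem in the special case in which each family $\mcal^i$ is a singleton, $\mcal^i = \{\mu^i\}$. A singleton in $\Pb(X^i)$ is tight: this is the content of the second item of the earlier remark, which uses that each $X^i$ is Polish (in particular complete and separable), and is also directly checkable, since $\mu^i$ is a Borel probability measure on a Polish space and hence inner regular by compacts. Thus the hypotheses of the compactness theorem are satisfied, and we conclude that the set
\[
\Pi = \{\gamma \in \Pb(X) \mid \pi^i_\sharp \gamma \in \mcal^i \text{ for all } i\} = \{\gamma \in \Pb(X) \mid \pi^i_\sharp \gamma = \mu^i \text{ for all } i\}
\]
is tight. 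For a general choice of tight families $\mcal^i$ the same theorem gives tightness of the corresponding $\Pi$ directly.

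Second, I would apply Prokhorov's theorem, as stated in the excerpt, to the product space $X = X^1 \times \cdots \times X^N$. A finite product of Polish spaces is again Polish, so $X$ is complete and separable and Prokhorov's theorem applies: a tight subset of $\Pb(X)$ is pre-compact for the tight convergence $\wstar$. Combining this with the tightness of $\Pi$ established in the first step yields that $\Pi$ is pre-compact for the tight convergence, which is exactly the assertion of the corollary; the case $\mcal^i = \{\mu^i\}$ gives in particular that $\Pi(\mu^1,\ldots,\mu^N)$ is pre-compact.

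\textbf{Main obstacle.} There is essentially no obstacle: the corollary is a bookkeeping statement whose substance is entirely contained in the compactness theorem and in Prokhorov's theorem, both already available. The only point requiring a word of justification is that a single Borel probability measure on a Polish space forms a tight family, and that a finite product of Polish spaces is Polish — both standard facts — so that the two cited results can be chained together.
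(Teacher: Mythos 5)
Your proof is correct and is exactly the argument the paper intends: the corollary follows by combining the tightness of $\Pi$ from the preceding compactness theorem (noting a singleton $\{\mu^i\}$ is tight on a Polish space) with Prokhorov's theorem applied on the product $X$, which is again Polish. Nothing further is needed.
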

 
If $c$ is lower semi-continuous, then also the functionals $C$ and  $C_\infty$ are lower semi-continuous with respect to the tight convergence of measures. The lower semi-continuity of $C$ is a standard result of optimal transport theory (see, for example, \cite{villani} or \cite{kellerer1984} for the multi-marginal case). The next lemma proves the lower semi-continuity of $C_\infty$. 
\begin{lemma}
If the function $c:X\to\R\cup\{+\infty\}$ is lower semi-continuous, then also the functional $C_\infty$ is lower semi-continuous. 
\end{lemma}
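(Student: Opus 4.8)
The plan is to show that if $\gamma_n \wstar \gamma$ then $C_\infty[\gamma] \le \liminf_n C_\infty[\gamma_n]$. First I would set $\ell := \liminf_n C_\infty[\gamma_n]$ and, passing to a subsequence (not relabelled), assume $C_\infty[\gamma_n] \to \ell$; if $\ell = +\infty$ there is nothing to prove, so assume $\ell < +\infty$. Fix an arbitrary $t > \ell$. For all $n$ large enough we have $C_\infty[\gamma_n] < t$, which by definition of the $\gamma_n$-essential supremum means that $c \le t$ holds $\gamma_n$-almost everywhere, i.e. $\gamma_n(\{c > t\}) = 0$, equivalently $\gamma_n(\{c \le t\}) = 1$. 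The key point is that $\{c \le t\}$ is a \emph{closed} set, because $c$ is lower semi-continuous.

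Next I would invoke the upper semi-continuity of the tight convergence on closed sets (stated in the Remark after Prokhorov's theorem): since $\{c > t\}$ is open, the lower semi-continuity on open sets gives $\gamma(\{c > t\}) \le \liminf_n \gamma_n(\{c > t\}) = 0$. Hence $\gamma(\{c \le t\}) = 1$, so $c \le t$ holds $\gamma$-almost everywhere, which means $C_\infty[\gamma] = \gamma\text{-}\esup\, c \le t$. Since $t > \ell$ was arbitrary, letting $t \downarrow \ell$ yields $C_\infty[\gamma] \le \ell = \liminf_n C_\infty[\gamma_n]$, as desired.

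The argument is short and the only genuinely delicate point is making sure the set-level inequality is applied in the right direction: one wants the \emph{mass of the open superlevel set} $\{c > t\}$ to be lower semi-continuous under tight convergence (equivalently, the mass of the closed sublevel set $\{c \le t\}$ to be upper semi-continuous), and this is precisely where lower semi-continuity of $c$ enters, guaranteeing $\{c > t\}$ is open. A minor subtlety worth a sentence is the choice of $t$ strictly above $\ell$ rather than equal to $\ell$: the strict inequality $C_\infty[\gamma_n] < t$ for large $n$ is what forces $\gamma_n(\{c > t\}) = 0$, whereas at $t = \ell$ one would only get $\gamma_n(\{c > \ell\})$ small but not necessarily zero. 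I do not anticipate any further obstacle; no compactness or properness of $c$ is needed for this direction.
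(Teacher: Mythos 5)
Your proof is correct, but it follows a genuinely different route from the paper's. The paper first uses the lower semi-continuity of $c$ to rewrite the essential supremum as a supremum over the support, $C_\infty[\gamma]=\sup\{c(v)\,:\,v\in\supp\gamma\}$, and then argues pointwise: every $v\in\supp\gamma$ is the limit of points $v^n\in\supp\gamma^n$, so $\liminf_n C_\infty[\gamma^n]\ge\liminf_n c(v^n)\ge c(v)$, and taking the supremum over $v$ concludes. You instead work at the level of sets: from $C_\infty[\gamma_n]<t$ you get $\gamma_n(\{c>t\})=0$, the superlevel set $\{c>t\}$ is open by lower semi-continuity of $c$, and the portmanteau inequality (the lower semi-continuity of tight convergence on open sets, stated in the paper's remark after Prokhorov) gives $\gamma(\{c>t\})=0$, hence $C_\infty[\gamma]\le t$ for every $t>\liminf_n C_\infty[\gamma_n]$. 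Your version has the advantage of working directly with the measure-theoretic definition of the essential supremum and of not needing the (asserted but unproved) fact that support points of the limit are approximated by support points of $\gamma^n$ --- a fact which itself rests on the same portmanteau inequality you invoke, so in a sense you use the key ingredient more economically; the paper's version, on the other hand, yields along the way the identity $C_\infty[\gamma]=\sup_{\supp\gamma}c$, which is convenient later (e.g.\ when choosing $u^n\in\supp\beta^n$ with $C_\infty[\beta^n]=c(u^n)$ in the $\Gamma$-limsup construction). The only cosmetic point in your write-up is the untreated possibility $\liminf_n C_\infty[\gamma_n]=-\infty$; since $c>-\infty$ everywhere and the $\gamma_n$ are probability measures with a tight limit, your own level-set argument shows this cannot occur (it would force $\gamma(\{c>t\})=0$ for all real $t$), so no real gap results.
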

\begin{proof}
First we note that, thanks to the lower semi-continuity of $c$, its $\gamma$-essential supremum can be written as
\[\gamma-\esup c=\sup \{c(x^1,\ldots,x^N)~| ~(x^1,\ldots,x^N)\in \supp \gamma\}.\]
Fix  $\gamma\in\Pi(\mu^1,\ldots,\mu^N)$ and let $(\gamma^n)_n$ be a sequence converging to $\gamma$. Now there exist a vector $v\in\supp\gamma$ and a sequence $v^n=(x^{1,n},\ldots,x^{N,n})\in\prod_{i=1}^NX^i$ such that $v^n\in\supp\gamma^n$ for all $n$ and $v^n\to v$. Moreover
\[\liminf_{n\to\infty}C_\infty[\gamma^n]\ge \liminf_{n\to\infty}c(v^n)\ge c(v)\,.\]
Since the above inequality holds for all $v\in\supp\gamma$ and for all sequences converging to $v$, it also holds for the $\gamma$-essential supremum, and the claim follows.
\end{proof}

The use of compactness and semi-continuity theorems above gives the existence of optimal transport plans for both problems considered here.
	
\subsection{$\Gamma$-convergence}	
A crucial tool that we will use in this paper is $\Gamma$-convergence. 
All the details can be found, for instance, in Braides's book \cite{braides2002gamma} or in the classical book by 
Dal Maso \cite{dal1993introduction}. In what follows, $(X,d)$ is a metric space or a topological space equipped with a convergence.

\begin{definition} Let $(F_n)_n$ be a sequence of functions $X \mapsto \bar\R$. We say that $(F_n)_n$ $\Gamma$-converges to 
$F$ if for any $x \in X$ we have
\begin{itemize}
\item for any sequence $(x^n)_n$ of $X$ converging to $x$
$$ \liminf\limits_n F_n(x^n) \geq F(x) \qquad \text{($\Gamma$-liminf inequality);}$$
\item there exists a sequence $(x^n)_n$ converging to $x$ and such that
$$ \limsup\limits_n F_n(x^n) \leq F(x) \qquad \text{($\Gamma$-limsup inequality).} $$
\end{itemize} 
\end{definition}

This definition is actually equivalent to the following equalities for any $x \in X$:
\[
F(x) = \inf\left\{ \liminf\limits_n F_n(x^n) : x^n \to x \right\} = \inf\left\{ \limsup\limits_n F_n(x^n) : x^n \to x \right\} 
\]
The function $x \mapsto  \inf\left\{ \liminf\limits_n F_n(x^n) : x^n \to x \right\}$ is called $\Gamma$-liminf of the sequence $(F_n)_n$ and the other one its $\Gamma$-limsup. A useful result is the following (which for instance implies that a constant sequence of functions does not $\Gamma$-converge to itself in general).

\begin{proposition}\label{lsc} 
The $\Gamma$-liminf and the $\Gamma$-limsup of a sequence of functions $(F_n)_n$ are both lower semi-continuous on $X$. 
\end{proposition}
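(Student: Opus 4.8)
The plan is to verify the (sequential) lower semi-continuity of the two envelopes directly, by a diagonal construction based on the variational formulas recalled just above; we take $X$ metric with distance $d$, which is the case relevant for the applications here (a general convergence obeying a diagonal principle is handled in the same way). Write $F'$ and $F''$ for the $\Gamma$-liminf and the $\Gamma$-limsup of $(F_n)_n$, so that
\[ F'(x)=\inf\Big\{\liminf_n F_n(x^n):x^n\to x\Big\},\qquad F''(x)=\inf\Big\{\limsup_n F_n(x^n):x^n\to x\Big\}. \]
Fix $x\in X$ and a sequence $x_k\to x$; we must show $\liminf_k F'(x_k)\ge F'(x)$ and $\liminf_k F''(x_k)\ge F''(x)$. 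I treat $F'$ first. Passing to a subsequence we may assume $F'(x_k)\to\ell':=\liminf_k F'(x_k)$, and if $\ell'=+\infty$ there is nothing to prove, so assume $\ell'<+\infty$.

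For each $k$, the definition of $F'(x_k)$ as an infimum furnishes a recovery sequence $(y^{k,n})_n$ with $y^{k,n}\to x_k$ as $n\to\infty$ and $\liminf_n F_n(y^{k,n})<F'(x_k)+\tfrac1k$ (if $F'(x_k)=-\infty$, replace this bound by $-k$; in that case $F'(x)=-\infty$ will follow and we are done). Hence, for each fixed $k$, the index set $\{\,n:F_n(y^{k,n})<F'(x_k)+\tfrac1k\ \text{ and }\ d(y^{k,n},x_k)<\tfrac1k\,\}$ is infinite, and we may choose $n_1<n_2<\cdots$ with $n_k$ in the $k$-th such set. Glue the recovery sequences into one sequence $(z^n)_n$ by setting $z^n:=y^{k,n_k}$ whenever $n_k\le n<n_{k+1}$ (and $z^n:=x_1$ for $n<n_1$). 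Then $z^n\to x$: for $n$ in the $k$-th block, $d(z^n,x)\le d(y^{k,n_k},x_k)+d(x_k,x)<\tfrac1k+d(x_k,x)$, which tends to $0$ since $k\to\infty$ as $n\to\infty$. Moreover $F_{n_k}(z^{n_k})<F'(x_k)+\tfrac1k$ along the subsequence $(n_k)_k$, whence, using $F'(x_k)+\tfrac1k\to\ell'$ and $\liminf_n F_n(z^n)\le\liminf_k F_{n_k}(z^{n_k})$,
\[ F'(x)\le\liminf_n F_n(z^n)\le\liminf_k F_{n_k}(z^{n_k})\le\liminf_k\Big(F'(x_k)+\tfrac1k\Big)=\ell'. \]

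The argument for $F''$ is the same, and in fact slightly simpler. Passing to a subsequence, assume $F''(x_k)\to\ell'':=\liminf_k F''(x_k)<+\infty$. Now the recovery sequences can be chosen with $\limsup_n F_n(y^{k,n})<F''(x_k)+\tfrac1k$, so there is a threshold $N_k$ with $F_n(y^{k,n})<F''(x_k)+\tfrac1k$ for \emph{every} $n\ge N_k$; enlarging $N_k$ we also get $d(y^{k,n},x_k)<\tfrac1k$ for $n\ge N_k$. Choose $n_1<n_2<\cdots$ with $n_k\ge N_k$ and glue by $z^n:=y^{k,n}$ for $n_k\le n<n_{k+1}$ — here the glued point varies with $n$ inside each block, which is harmless precisely because $y^{k,n}$ stays $\tfrac1k$-close to $x_k$ for all $n\ge N_k$. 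Then $z^n\to x$ and $F_n(z^n)<F''(x_k)+\tfrac1k$ throughout the $k$-th block, so $\limsup_n F_n(z^n)\le\ell''$ and therefore $F''(x)\le\ell''$.

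The one point deserving attention is the bookkeeping in the diagonal extraction: the cut-offs $n_k$ must be taken large enough that the glued sequence still converges to $x$, and — in the $\liminf$ case — one controls the value of $F_n$ at $y^{k,n}$ only along a further subsequence of $n$, so the $k$-th block $[n_k,n_{k+1})$ has to be long enough to contain an admissible index; the recursive choice above satisfies both constraints, and the extended-real values $F'(x_k),F''(x_k)=-\infty$ are absorbed by the parenthetical modification. (If one prefers a topological rather than a purely sequential setting, the statement is immediate from the equivalent neighbourhood formula $F'(x)=\sup_{U\ni x}\liminf_n\inf_U F_n$ and its analogue for $F''$: around any point of a superlevel set $\{F'>\lambda\}$ there is an open set $U$ on which the constant $\liminf_n\inf_U F_n$ already exceeds $\lambda$, so that superlevel set is open.)
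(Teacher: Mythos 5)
Your proof is correct. Note, however, that the paper itself offers no proof of Proposition \ref{lsc}: it is quoted as standard background, with the reader referred to Braides \cite{braides2002gamma} and Dal Maso \cite{dal1993introduction}, so there is no in-paper argument to compare against. What you give is essentially the classical metric-space proof: for each point $x_k$ of the approximating sequence pick an almost-optimal recovery sequence and diagonalize, controlling the value of $F_{n}$ at the glued point along the subsequence $(n_k)$ in the $\liminf$ case (which suffices, since the $\liminf$ of the full sequence is bounded above by the $\liminf$ along any subsequence) and on whole blocks in the $\limsup$ case. The checks all go through: the admissible index set for each $k$ is infinite, so the recursive extraction is possible; the glued sequence converges to $x$ by the triangle inequality; and the $-\infty$ values are absorbed by replacing $F'(x_k)+\tfrac1k$ with $-k$. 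One small expository quibble: in your closing remark you say the $k$-th block must be ``long enough to contain an admissible index,'' but in your construction the admissible index is the left endpoint $n_k$ itself, so no length condition on the block is actually needed. Your parenthetical caveat is also apt: the paper allows ``a topological space equipped with a convergence,'' and for a purely sequential definition of the $\Gamma$-limits lower semicontinuity requires a diagonal (Urysohn-type) property of the convergence, or else one should pass to the neighbourhood formula $F'(x)=\sup_{U\ni x}\liminf_n\inf_U F_n$, under which the superlevel sets are open by inspection; in the paper's application the convergence is tight convergence of probability measures on a Polish space, which is metrizable, so your metric argument covers what is needed.
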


The main interest of $\Gamma$-convergence resides in its consequences in terms of convergence of minima.

\begin{theorem}\label{convminima} 
Let $(F_n)_n$ be a sequence of functions $X \to \bar\R$ and assume that $F_n$ $\Gamma$-converges to $F$. Assume moreover that there exists a compact and non-empty subset $K$ of $X$ such that
$$ \forall n\in \N, \; \inf_X F_n = \inf_K F_n $$
(we say that $(F_n)_n$ is equi-mildly coercive on $X$). Then $F$ admits a minimum on $X$ and the sequence $(\inf_X F_n)_n$ converges to $\min F$. Moreover, if $(x_n)_n$ is a sequence of $X$ such that
$$ \lim_n F_n(x_n) = \lim_n (\inf_X F_n)  $$
and if $(x_{\phi(n)})_n$ is a subsequence of $(x_n)_n$ having a limit $x$, then $ F(x) = \inf_X F $. 
\end{theorem}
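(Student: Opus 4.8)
The plan is to establish separately the two inequalities $\inf_X F \ge \limsup_n \inf_X F_n$ and $\inf_X F \le \liminf_n \inf_X F_n$: the first follows from the $\Gamma$-limsup inequality alone, the second from the $\Gamma$-liminf inequality combined with equi-mild coercivity. Convergence of the infima and attainment of the minimum then drop out immediately, and the statement on minimizing sequences is a short addendum using once more the $\Gamma$-liminf inequality.

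For the first inequality I would fix an arbitrary $x \in X$ and invoke the $\Gamma$-limsup inequality to obtain a recovery sequence $x^n \to x$ with $\limsup_n F_n(x^n) \le F(x)$. Since $\inf_X F_n \le F_n(x^n)$ for every $n$, passing to the upper limit gives $\limsup_n \inf_X F_n \le F(x)$, and taking the infimum over $x \in X$ yields $\limsup_n \inf_X F_n \le \inf_X F$.

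For the reverse inequality I would use coercivity. For each $n$ choose $x_n \in K$ with $F_n(x_n) \le \inf_K F_n + 1/n = \inf_X F_n + 1/n$ (replacing the right-hand side by $-n$ in the degenerate case $\inf_X F_n = -\infty$); then pass to a subsequence $\phi(n)$ along which $\inf_X F_{\phi(n)} \to \liminf_n \inf_X F_n$, and, using compactness of $K$, to a further subsequence, not relabeled, along which $x_{\phi(n)} \to x \in K$. The $\Gamma$-liminf inequality applied to $(x_{\phi(n)})_n$ gives
\[
F(x) \le \liminf_n F_{\phi(n)}(x_{\phi(n)}) \le \liminf_n \Bigl( \inf_X F_{\phi(n)} + \tfrac{1}{\phi(n)} \Bigr) = \liminf_n \inf_X F_n,
\]
so $\inf_X F \le F(x) \le \liminf_n \inf_X F_n$. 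Together with the first step this forces $\inf_X F = \liminf_n \inf_X F_n = \limsup_n \inf_X F_n = F(x)$; hence $\inf_X F_n \to \inf_X F =: \min_X F$, and the infimum is attained, for instance at the point $x$ just produced. Finally, if $(x_n)_n$ satisfies $\lim_n F_n(x_n) = \lim_n \inf_X F_n$ and $x_{\phi(n)} \to x$ along some subsequence, the $\Gamma$-liminf inequality gives $F(x) \le \liminf_n F_{\phi(n)}(x_{\phi(n)}) = \lim_n F_n(x_n) = \min_X F$, whence $F(x) = \min_X F = \inf_X F$.

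I do not expect a genuine obstacle here: this is the classical fundamental theorem of $\Gamma$-convergence. The only points requiring care are the bookkeeping with nested subsequences, the handling of the degenerate possibility $\inf_X F_n = -\infty$, and making sure that in the topological/convergence framework in which $\Gamma$-convergence has been set up, compactness of $K$ indeed yields a convergent subsequence of $(x_{\phi(n)})_n$.
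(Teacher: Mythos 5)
The paper does not prove this statement at all: it is quoted as the classical fundamental theorem of $\Gamma$-convergence, with the proof delegated to Braides \cite{braides2002gamma} and Dal Maso \cite{dal1993introduction}, and your argument is exactly the standard proof found there and is correct. The only two points worth making explicit are that the $\Gamma$-$\liminf$ inequality is applied along a subsequence (justified by extending $(x_{\phi(n)})_n$ to a full sequence converging to $x$, e.g.\ by repeating terms, and noting that the $\liminf$ over a subsequence dominates the $\liminf$ over the full sequence), and that compactness of $K$ must yield a convergent subsequence in the convergence being used, which holds in the metric setting and in the paper's application where tightness and Prokhorov's theorem give sequential compactness.
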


\section{Discretisation of transport plans (Dyadic-type decomposition in Polish spaces)}\label{dyaddecomp}	

Let $\gamma$ be a Borel probability measure on $X=(X^1,d_1)\times \cdots\times (X^N,d_N)$ with marginals $\mu^1,\ldots,\mu^N$. 
The space $X$ will be equipped with the $\sup$ metric  
\[
d(w,z)=\max_{1\le i\le N}d_i(w^i,z^i).
\]
Let $\varepsilon_n=\frac 1n$. Since  $\{\mu^i\}_{i=1}^N$ are Borel probability measures, they are inner regular. Hence for all $n$ there exist compact sets $K^{1,n}\subset\supp\mu^1, K^{2,n}\subset\supp\mu^2,\ldots, K^{N,n}\subset\supp\mu^N$ such that 
\begin{equation}\label{frominside}
\mu^k(X^k\setminus K^{k,n})<\tfrac{\varepsilon_n}{N},
\end{equation}
for all $k=1,\ldots,N.$
We may assume that, for all $k$ and $n$, $K^{k,n}\subset K^{k,{n+1}}$. \\
We denote $K^{n}:=\prod_{k=1}^N K^{k,n}$. Since 
\[ X\setminus K^n \subset ((X^1\setminus K^{1,n})\times \prod_{k=2}^NX^k)\cup (X^1\times (X^2\setminus K^{2,n} )\times\prod_{k=3}^NX^k)\cup\cdots\cup (\prod_{k=1}^{N-1}X^k\times(X^N\setminus K^{N,n})),
\]
one gets
\[
\gamma(X\setminus K^n)\leq \varepsilon_n.
\]
The cost $c$ is uniformly continuous on each $K^{n}$, and for all $n$ we can fix $\delta_n\in (0,\varepsilon_n)$ such that the sequence ($\delta_n$) is decreasing in $n$ and
\[
|c(u)-c(z)|<\varepsilon_n ~~~\text{ for all }u,z\in K^n\text{ for which } d(u,z)<\delta_n.
\]
Next we fix, for all $n$, finite Borel partitions for the sets $K^{1,n},\ldots,K^{N,n}$. We denote these by $\{\tilde B_i^{k,n}\}_{i=1}^{\tilde m^{k,n}}$,  $k=1,\ldots,N$, and we choose them in such a way that for all $n\in\N$ and $k\in\{1,\ldots,N\}$
\[
diam(\tilde B_i^{k,n})<\tfrac12\delta_n,
\]
for all $i\in\{1,\ldots,\tilde m^{k,n}\}$.

We form a new, possibly finer, partition $\{B_i^{k,n}\}_{i=1}^{m^{k,n}}$ for each $K^{k,n}$ by intersecting (if the intersection if nonempty) each element $\tilde B_i^{k,n}$ successively first with the set $K^{k,1}$, then with $K^{k,2}$, and so on up until intersecting with the set $K^{k,n-1}$.  So that for $j \in \{1, \dots n\}$ either $B_i^{k,n} \cap K^{k,j}$ it's empty or it is the entire  $B_i^{k,n}$.
The products
\[
\mathcal{Q}^n=\{ B_{i_1}^{1,n}\times B_{i_2}^{2,n}\times\cdots\times  B_{i_N}^{N,n},~i_k\in\{1,\ldots, m^{k ,n} \}\text{ for all }k=1,\ldots,N\}
\]
form a partition of the set $K^n$ with
\[diam(B_i^{k,n})<\tfrac12\delta_n,
\]
for all $i\in\{1,\ldots,m^{k,n}\}.$

We denote 
\[I^n=\{(i_1,\ldots,i_N)~| ~\gamma(B_{i_1}^{1,n}\times B_{i_2}^{2,n}\times\cdots\times B_{i_N}^{N,n})>0\},\]
and for all ${\bf i}:=(i_1,\ldots,i_N)\in I^n$ we use the notation $Q_\mathbf{i}^n:= B_{i_1}^{1,n}\times\cdots \times B_{i_N}^{N,n}$. We fix points $z^n _{\bf i}=z_{i_1,\ldots,i_N}^n\in\prod_{k=1}^NB_{i_k}^{k,N}\cap\supp\gamma$ (i.e. $z^n _\mathbf{i}\in  Q^n_\mathbf{i}\cap \supp \gamma$).
We define 
\[
\tilde\alpha^n=\sum_{(i_1,\ldots,i_N)\in I^n}\gamma(B_{i_1}^{1,n}\times\cdots \times B_{i_N}^{N,n})\delta_{z_{i_1,\ldots,i_N}^n}~~~\text{and}~~~\alpha^n=\frac{1}{\gamma(K^n)}\tilde\alpha^n;
\]
since $\tilde\alpha^n(X)=\gamma(K^n)$, the measures $\alpha^n$ are probability measures.  

To each multi-index $\mathbf i=(i_1,\ldots,i_N)$ and thus to each point $z_\mathbf i^n$ correspond $N$ points
\[x_\mathbf{i}^{1,n}\in B_{i_1}^{1,n},\ldots,~x_\mathbf{i}^{N,n}\in B_{i_N}^{N,n},\]
which are "coordinates" in the spaces $X_i$ of $z_\mathbf i ^n$.
The marginals of $\alpha^n$ are supported on the Dirac measures given by these points. 
We denote these marginals by $\mu^{1,n},\ldots, \mu^{N,n}$.
More precisely, they can be described as 
\begin{equation}\label{marginal}
\mu^{k,n}=\frac{1}{\gamma(K^n)}\sum_{i=1}^{m_{k,n}} \sum_{\stackrel{{\bf i} \in I^n}{i={\bf i}_k}} \gamma(Q_\mathbf{i}) \delta_{x_\mathbf{i}^{k,n}}. 
\end{equation}

\begin{proposition}\label{discretegamma} $\alpha^n\rightharpoonup\gamma$. 
\end{proposition}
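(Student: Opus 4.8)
The plan is to verify tight convergence straight from the definition: I would fix $\phi\in C_b(X)$ and prove $\int\phi\,d\alpha^n\to\int\phi\,d\gamma$. The natural move is to decompose
\[
\int\phi\,d\alpha^n-\int\phi\,d\gamma=\Big(\tfrac{1}{\gamma(K^n)}-1\Big)\!\int\phi\,d\tilde\alpha^n+\Big(\int\phi\,d\tilde\alpha^n-\int_{K^n}\!\phi\,d\gamma\Big)+\Big(\int_{K^n}\!\phi\,d\gamma-\int_X\phi\,d\gamma\Big),
\]
where the three terms account, respectively, for the renormalisation $\alpha^n=\tilde\alpha^n/\gamma(K^n)$, for the genuine discretisation error, and for the truncation of $\gamma$ to $K^n$.

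The first and third terms are routine. Since $\gamma(X\setminus K^n)\le\varepsilon_n\to0$ we have $\gamma(K^n)\to1$, hence $\big|\int_{K^n}\phi\,d\gamma-\int_X\phi\,d\gamma\big|\le\|\phi\|_\infty\,\gamma(X\setminus K^n)\to0$; and since $\tilde\alpha^n(X)=\gamma(K^n)\le1$ and $\|\phi\|_\infty<\infty$, the first term is bounded by $\|\phi\|_\infty\,\gamma(X\setminus K^n)/\gamma(K^n)\to0$.

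The core is the middle term. Recalling $\tilde\alpha^n=\sum_{\mathbf i\in I^n}\gamma(Q^n_{\mathbf i})\delta_{z^n_{\mathbf i}}$, that the cells $\{Q^n_{\mathbf i}\}_{\mathbf i}$ partition $K^n$, and that the cells with $\mathbf i\notin I^n$ carry no $\gamma$-mass, I would estimate
\[
\Big|\int\phi\,d\tilde\alpha^n-\int_{K^n}\!\phi\,d\gamma\Big|\le\sum_{\mathbf i\in I^n}\int_{Q^n_{\mathbf i}}\big|\phi(z^n_{\mathbf i})-\phi(z)\big|\,d\gamma(z).
\]
Since $\phi$ need not be uniformly continuous on all of $X$, for a prescribed tolerance I would first fix $n_0$ and split the sum according to whether $Q^n_{\mathbf i}\subset K^{n_0}$ or $Q^n_{\mathbf i}\cap K^{n_0}=\emptyset$: for $n>n_0$ exactly one of these holds for every cell, because each $B^{k,n}_{i}$ is, by the refining construction of the partitions, either contained in or disjoint from $K^{k,n_0}$, so that $K^{n_0}$ is a union of cells of $\mathcal Q^n$. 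On the cells disjoint from $K^{n_0}$ the integrand is at most $2\|\phi\|_\infty$ while their total $\gamma$-mass is at most $\gamma(X\setminus K^{n_0})\le\varepsilon_{n_0}$; on the cells contained in the compact set $K^{n_0}$ the uniform continuity of $\phi$ on $K^{n_0}$ together with $\operatorname{diam}Q^n_{\mathbf i}<\tfrac12\delta_n\to0$ (in the $\sup$ metric) makes $|\phi(z^n_{\mathbf i})-\phi(z)|$ uniformly small once $n$ is large. Choosing $n_0$ large first and then $n$ large completes the bound.

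The main obstacle is exactly this lack of uniform continuity of $\phi$ on a possibly non-locally-compact Polish space: the discretisation error must be localised to a large compact set, and what makes the localisation clean is the dyadic-type refinement built into the construction of $\mathcal Q^n$, which forces every cell to lie either entirely inside or entirely outside $K^{n_0}$. Everything else is bookkeeping with the mass estimate $\gamma(X\setminus K^n)\le\varepsilon_n$ and the diameter bound $\operatorname{diam}Q^n_{\mathbf i}<\tfrac12\delta_n$.
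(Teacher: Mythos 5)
Your proof is correct and follows essentially the same route as the paper's: both arguments hinge on the renormalisation bound $\gamma(X\setminus K^n)\le\varepsilon_n$, the tail mass estimate, and a cell-by-cell uniform-continuity estimate for $\phi$ localised to a fixed large compact set ($K^{n_0}$ for you, $K^{\bar n}$ in the paper), using exactly the refinement property that makes that compact set a union of cells of $\mathcal Q^n$. The only difference is bookkeeping: you split the discretisation sum by cell location, while the paper first splits the integral over $K^{\bar n}$ and its complement, which is an equivalent organisation of the same estimates.
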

\begin{proof}
Let $\varepsilon>0$ and $\varphi\in C_b(X)$. 
We have to find $n_0\in\N$ such that
\begin{equation}\label{claim}
|\int_X \varphi d \gamma- \int_X\varphi d \alpha^n | <\varepsilon, ~~~\text{ for all }n\ge n_0.
\end{equation}
Let  $M>0$ be such that
\[
|\varphi(z)| \le M~~~\text{for all }z\in X.\]
We fix $\bar n\in\N$ such that
\[
\gamma(X\setminus K^n)<\min\left\{\frac{1}{2},\frac{\varepsilon}{5M}\right\},~~~\text{for all }n\ge \bar n
\]
Since $\varphi\in C_b(X)$, it is uniformly continuous on the set $K^{\bar n}$, there exists $\delta>0$ such that 
\[
|\varphi(z)-\varphi(v)|<\frac{\varepsilon}{5}~~~\text{ for all }z,v\in K^{\bar n}\text{ such that }d(z,v)<\delta.
\]
Moreover, the decomposition $\mathcal{Q}^n$ has been constructed so that there exists $n_0 \geq \bar{n}$ such that for all $k \in \{1, \dots, N\}$ and $n \geq n_0$
\[
diam(B_i^{k,n})<\delta~~~\text{for all }i\in\{1,\ldots,m^{k,n}\}.
\]
We start from
\begin{equation}\label{total}
|\int_X\varphi \d\gamma-\int_X\varphi \d\alpha^n | \le | \int_{K^{\bar n}}\varphi \d\gamma-\int_{K^{\bar n}}\varphi \d\alpha^n |+ |\int_{X\setminus K^{\bar n}}\varphi \d\gamma-\int_{X\setminus K^{\bar n}}\varphi \d\alpha^n|
\end{equation}
and we evaluate separately the two terms on the RHS. 
For all $n\ge n_0$ the first term can be estimated as follows: (we recall that, by construction, $K^{\bar n}\subset K^n$) 
\begin{align} \label{long}
&| \int_{K^{\bar n}}\varphi\d\gamma-\int_{K^{\bar n}}\varphi\d\alpha^n| 
=|\int_{K^{\bar n}}\varphi\d\gamma-\frac{1}{\gamma(K^n)}\int_{K^{\bar n}}\varphi\d\tilde\alpha^n|\nonumber\\
&\stackrel{a)}{\le}|\int_{ K^{\bar n}}\varphi\d\gamma-\int_{K^{\bar n}}\varphi\d\tilde\alpha^n| +\frac{\gamma(X\setminus K^n)}{1-\gamma(X\setminus K^n)}\int_{K^{\bar n}}|\varphi|\d\tilde\alpha^n\nonumber\\
&<|\int_{K^{\bar n}}\varphi\d\gamma-\int_{K^{\bar n}}\varphi\d\tilde\alpha^n| +M\cdot 2\cdot\frac{\varepsilon}{5M}\nonumber\\
&<|\int_{ K^{\bar n}}\varphi\d\gamma-\int_{K^{\bar n}}\varphi\d\tilde\alpha^n| +\frac{2\varepsilon}{5}.
\end{align} 
Above in $a)$ we have written
\[
\frac{1}{\gamma(K^n)}=\frac{1}{1-\gamma(X\setminus K^n)}=1+\frac{\gamma(X\setminus K^n)}{1-\gamma(X\setminus K^n)}
\]
and then estimated the numerator from above by $\frac{\varepsilon}{5M}$ and the term $\gamma(X\setminus K^n)$ of the denominator from below by $\frac 12$.
By construction, since $n\ge \bar n$, there exist a subset $\bar I^n\subset I^n$ such that 
\[
K^{\bar n}=\bigcup_{(i_1,\ldots, i_N)\in \bar I^n}B_{i_1}^{1,n}\times\cdots\times B_{i_N}^{N,n}.
\]
So we write
\[
\int_{K^{\bar n}}\varphi\d\gamma-\int_{K^{\bar n}}\varphi\d\tilde\alpha^n=\sum_{\mathbf{i}\in \bar I^n}\left(\int_{(B_{i_1}^{1,n}\times\cdots\times B_{i_N}^{N,n})}\varphi\d\gamma-\int_{(B_{i_1}^{1,n}\times\cdots\times B_{i_N}^{N,n})}\varphi\d\tilde\alpha^n\right).
\]

We simplify the notations for the next few lines and, for all $\mathbf{i}\in \bar I^n$, we denote by  $Q:=B_{i_1}^{1,n}\times\cdots\times B_{i_N}^{N,n}$ and by $u_0=z_{i_1,\ldots,i_N}\in Q$ the point in which $\tilde \alpha ^n$ is concentrated.  
Then for each 'cube' $Q$ 
\begin{align*}
&|\int_{Q}\varphi(u)\,\d\gamma -\int_{Q}\varphi(u)\,\d\tilde\alpha^n|=|\int_{Q}\varphi(u)\,\d\gamma-\varphi(u_0)\gamma(Q)|\\
&\le\int_{Q}|\varphi(u)-\varphi(u_0)|\,\d\gamma \leq \gamma(Q)\cdot\frac{\varepsilon}{5},
\end{align*}
and in the last passage we have used the uniform continuity of $\varphi$ on $K^{\bar n}$. 
Summing the estimate above over all cubes $Q=B_{i_1}^{1,n}\times\cdots\times B_{i_N}^{N,n}$, $\mathbf{i}\in \bar I^n$, gives
\[
|\int_{K^{\bar n}} \varphi \d\gamma - \int_{K^{\bar n}} \varphi \d\tilde\alpha^n |< \gamma(K^{\bar n})\cdot\frac{\varepsilon}{5} \leq \frac{\varepsilon}{5}.
\]
Combining this estimate with \eqref{long} gives us the estimate
\begin{equation}\label{int1}
|\int_{K^{\bar n}}\varphi\d\gamma-\int_{K^{\bar n}}\varphi\d\alpha^n|<\frac15 \varepsilon+\frac{2\varepsilon}{5}=\frac35 \varepsilon.
\end{equation}
Finally, the 'tail' term in (\ref{total}). Using the set $\bar I^n$ defined above one gets
\begin{align*}
\alpha^n(X\setminus K^{\bar n})&=1-\frac{1}{\gamma(K^n)}\sum_{(i_1,\ldots,i_N)\in \bar I^{n}}\gamma(B_{i_1}^{1,n}\times\cdots\times B_{i_N}^{N, n})\\
&=1-\frac{\gamma(K^{\bar n})}{\gamma(K^n)}\le 1-\gamma(K^{\bar n})<\frac{\varepsilon}{5M}.
\end{align*}
Using this we get
\begin{align}
&|\int_{X\setminus K^{\bar n}}\varphi\d\gamma-\int_{X\setminus K^{\bar n}}\varphi\d\alpha^n|\le \int_{X\setminus K^{\bar n}}|\varphi|\d\gamma+\int_{X\setminus K^{\bar n}}|\varphi|\d\alpha^n\nonumber\\
&<M\frac{\varepsilon}{5M}+M\frac{\varepsilon}{5M}=\frac25\varepsilon.\label{int2}
\end{align}
Together estimates (\ref{int1}) and (\ref{int2}) prove the claim (\ref{claim}). 
\end{proof}
\begin{remark}\label{discretegamma2}
	If $\supp \mu^k$ is compact for $k=1, \dots, N$ then the dependence on $n$ of $K^n$ is not needed anymore since one can take $K^n\equiv K:= \supp \mu^1 \times \dots \times \supp \mu^N$. 
This also simplifies the analytic expressions of $\alpha^n$ and their marginal measures. 
\end{remark}
In line with the previous Remark we prove the following: 
\begin{proposition}\label{discrecompact}
If $\supp \mu^k$ is compact for $k=1, \dots, N$ then for all $k,n$ and all $i$ 
\[ 
\mu^{k,n} (B_i^{k,n})= \mu^k (B_i^{k,n})
\]
\end{proposition}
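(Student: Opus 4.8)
The plan is to exploit the simplifications recorded in Remark \ref{discretegamma2}: since every $\supp\mu^k$ is compact we may take $K^n\equiv K=\supp\mu^1\times\cdots\times\supp\mu^N$, so that $\gamma(K^n)=1$ and $\alpha^n=\tilde\alpha^n$. Under this choice formula \eqref{marginal} becomes $\mu^{k,n}=\sum_{i=1}^{m^{k,n}}\sum_{\mathbf i\in I^n,\ \mathbf i_k=i}\gamma(Q^n_\mathbf i)\,\delta_{x_\mathbf i^{k,n}}$, and the whole statement reduces to a bookkeeping computation with the partition $\{B_i^{k,n}\}_i$ of $K^{k,n}=\supp\mu^k$.

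First I would fix $k$, $n$ and an index $j$ and evaluate $\mu^{k,n}(B_j^{k,n})$ directly from this reduced formula. The key point is that, by the construction in Section \ref{dyaddecomp}, the point $x_\mathbf i^{k,n}$ is the $k$-th coordinate of $z_\mathbf i^n\in Q^n_\mathbf i$, hence $x_\mathbf i^{k,n}\in B_{\mathbf i_k}^{k,n}$; since $\{B_i^{k,n}\}_i$ is a partition, its elements are pairwise disjoint, so $\delta_{x_\mathbf i^{k,n}}(B_j^{k,n})$ equals $1$ if $\mathbf i_k=j$ and $0$ otherwise. Therefore $\mu^{k,n}(B_j^{k,n})=\sum_{\mathbf i\in I^n,\ \mathbf i_k=j}\gamma(Q^n_\mathbf i)$.

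Next I would rewrite this sum as a single $\gamma$-measure. Extending the sum to all multi-indices $\mathbf i$ with $\mathbf i_k=j$ (and not only those in $I^n$) changes nothing, since by definition of $I^n$ the omitted cubes satisfy $\gamma(Q^n_\mathbf i)=0$. The cubes $Q^n_\mathbf i$ are pairwise disjoint and, because in every coordinate $\ell\neq k$ the sets $B_i^{\ell,n}$ exhaust $K^{\ell,n}=\supp\mu^\ell$, their union over $\{\mathbf i:\mathbf i_k=j\}$ is exactly $\bigl(\prod_{\ell<k}\supp\mu^\ell\bigr)\times B_j^{k,n}\times\bigl(\prod_{\ell>k}\supp\mu^\ell\bigr)=(\pi^k)^{-1}(B_j^{k,n})\cap K$. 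Hence $\mu^{k,n}(B_j^{k,n})=\gamma\bigl((\pi^k)^{-1}(B_j^{k,n})\cap K\bigr)$, and since $\gamma$ is concentrated on $K$ this equals $\gamma\bigl((\pi^k)^{-1}(B_j^{k,n})\bigr)=\mu^k(B_j^{k,n})$, which is the assertion.

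The argument is essentially a matter of careful indexing, so I do not expect a real obstacle. The only points that need attention are: that the points $x_\mathbf i^{k,n}$ genuinely land in the correct and pairwise disjoint partition element $B_{\mathbf i_k}^{k,n}$, and that the union $\bigcup_{\mathbf i_k=j}Q^n_\mathbf i$ is identified correctly with $(\pi^k)^{-1}(B_j^{k,n})\cap K$; both follow directly from the construction of the partitions $\mathcal Q^n$ and from the compactness hypothesis via Remark \ref{discretegamma2}.
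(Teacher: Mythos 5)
Your argument is correct and is essentially the paper's own proof: the paper likewise evaluates the marginal of $\alpha^n$ on the cylinder over $B_i^{1,n}$, getting $\sum_{\mathbf i\in I^n,\ \mathbf i_1=i}\gamma(Q^n_{\mathbf i})=\gamma\bigl(B_i^{1,n}\times\prod_{k\ge 2}X^k\bigr)=\mu^1(B_i^{1,n})$, exactly the bookkeeping you carry out (you merely write it for a general $k$ and spell out the cylinder-set identification and the fact that $\gamma$ is concentrated on $K$ a bit more explicitly).
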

\begin{proof}
Again we prove the formula for the first marginal. 
\begin{align*} 
\alpha^n ( B_i ^{1,n}\times\prod_{k=2}^N X^k ) &= \sum_{\stackrel{\iub \in I^n}{i=i_1}} \gamma (Q_\iub ^n) \delta_{z_{\iub}^n} (B_i ^{1,n}\times\prod_{k=2}^N X^k)\\
	&=\sum_{\stackrel{\iub \in I^n}{i=i_1}} \gamma (Q_\iub ^n)=\gamma( B_i ^{1,n}\times\prod_{k=2}^N X^k).
\end{align*}
\end{proof}

\section{Variational approximations and conclusions}
In this section we prove the discrete approximations of the functionals that will be used in the optimality proofs. 
Given a transport plan $\gamma$, we have introduced, in the previous section, the dyadic approximation $\{\alpha^n\}_{n \in \N}$ of $\gamma$.
\subsection{The $\sup$ case.}\label{sup}
We define the functionals $\mathcal{F}_n,\mathcal{F}:\mathcal{P}(X)\to \R \cup\{+\infty\}$ by 
\[
\mathcal{F}_n(\beta)=
\begin{cases}
C_\infty[\beta]&\text{ if }\beta\in\Pi(\mu^{1,n},\ldots,\mu^{N,n}),\\
+\infty&\text{ otherwise; }
\end{cases}
\]
and
\[
\mathcal{F}(\beta)=
\begin{cases}
C_\infty[\beta]&\text{ if }\beta\in\Pi(\mu^1,\ldots,\mu^N),\\
+\infty&\text{ otherwise.}
\end{cases}
\]
For the rest of this subsection we assume that $c$ is continuous and that $\mu^i$ has compact support for $i=1,\dots,N.$ 
We prove the following
\begin{proposition}\label{gammaconv} The functionals $\mathcal{F}_n $ are equi-coercive and 
\begin{equation}\label{eq:gammaclaim} 
\mathcal{F}_n \stackrel{\Gamma}{\rightarrow}\mathcal{F}.
\end{equation}
\end{proposition}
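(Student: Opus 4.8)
The plan is to establish the two assertions of Proposition \ref{gammaconv} separately: equi-coercivity first, then the $\Gamma$-convergence, the latter split into the $\Gamma$-$\liminf$ and $\Gamma$-$\limsup$ inequalities.

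\textbf{Equi-coercivity.} Since every $\mu^i$ has compact support, by Remark \ref{discretegamma2} and Corollary \ref{precompatto} the set $\bigcup_n \Pi(\mu^{1,n},\ldots,\mu^{N,n})\cup\Pi(\mu^1,\ldots,\mu^N)$ is contained in $\mathcal P(K)$ with $K=\supp\mu^1\times\cdots\times\supp\mu^N$ compact, because each $\mu^{k,n}$ is supported in $\supp\mu^k$. Thus $K$ itself, viewed as a compact subset of $\mathcal P(X)$ via Prokhorov, works: $\inf_{\mathcal P(X)}\mathcal F_n=\inf_{\mathcal P(K)}\mathcal F_n$ for every $n$, which is exactly equi-mild coercivity in the sense of Theorem \ref{convminima}. (One must check $\mathcal P(X)$-valued compactness: the family of competitors is tight since it lives in $\mathcal P(K)$, hence precompact, and this is enough.)

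\textbf{$\Gamma$-$\liminf$ inequality.} Fix $\beta\in\mathcal P(X)$ and $\beta^n\wstar\beta$ with $\liminf_n\mathcal F_n(\beta^n)<\infty$ (otherwise nothing to prove); pass to a subsequence realizing the liminf along which $\beta^n\in\Pi(\mu^{1,n},\ldots,\mu^{N,n})$. The key step is to show $\beta\in\Pi(\mu^1,\ldots,\mu^N)$, i.e. $\pi^k_\sharp\beta=\mu^k$. For this I would use that $\mu^{k,n}\wstar\mu^k$: indeed Proposition \ref{discretegamma} gives $\alpha^n\wstar\gamma$ when the construction is applied to $\gamma$, but here we need it applied to an arbitrary transport plan with the prescribed marginals; since the marginals $\mu^{k,n}$ depend only on the partitions and on $\mu^k$ (through Proposition \ref{discrecompact}, $\mu^{k,n}(B_i^{k,n})=\mu^k(B_i^{k,n})$ with $\operatorname{diam}B_i^{k,n}\to 0$), one gets $\mu^{k,n}\wstar\mu^k$ directly by the same estimate as in Proposition \ref{discretegamma}. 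Then $\pi^k_\sharp\beta^n=\mu^{k,n}\wstar\mu^k$ and $\pi^k_\sharp\beta^n\wstar\pi^k_\sharp\beta$ by continuity of the pushforward under tight convergence, so $\pi^k_\sharp\beta=\mu^k$ and $\mathcal F(\beta)=C_\infty[\beta]$. Finally $\liminf_n\mathcal F_n(\beta^n)=\liminf_n C_\infty[\beta^n]\ge C_\infty[\beta]=\mathcal F(\beta)$ by the lower semi-continuity of $C_\infty$ proved in the lemma of Section 2.

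\textbf{$\Gamma$-$\limsup$ inequality.} Fix $\beta\in\mathcal P(X)$; we may assume $\beta\in\Pi(\mu^1,\ldots,\mu^N)$, otherwise $\mathcal F(\beta)=+\infty$ and any recovery sequence works. We must build $\beta^n\in\Pi(\mu^{1,n},\ldots,\mu^{N,n})$ with $\beta^n\wstar\beta$ and $\limsup_n C_\infty[\beta^n]\le C_\infty[\beta]$. The natural candidate is to ``discretize'' $\beta$ by the same recipe used to build $\alpha^n$ from $\gamma$: on each cube $Q^n_{\mathbf i}=B^{1,n}_{i_1}\times\cdots\times B^{N,n}_{i_N}$ put mass $\beta(Q^n_{\mathbf i})$ at a point of $Q^n_{\mathbf i}\cap\supp\beta$; but this has marginals $\beta(B^{k,n}_i\times\prod_{j\ne k}X^j)=\mu^k(B^{k,n}_i)=\mu^{k,n}(B^{k,n}_i)$ \emph{mass by mass}, so by Lemma \ref{permutations} (rational case, then the approximation argument in the proof of Proposition \ref{icmfinitelyoptimal} for the general case) this discretized $\beta^n$ and $\alpha^n$ have, up to relabelling of points within each $B^{k,n}_i$, the \emph{same} marginals $\mu^{k,n}$; hence one can take $\beta^n$ literally in $\Pi(\mu^{1,n},\ldots,\mu^{N,n})$ after matching the atoms inside each block. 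Tight convergence $\beta^n\wstar\beta$ follows exactly as in Proposition \ref{discretegamma} since the cube diameters $\operatorname{diam}B^{k,n}_i<\tfrac12\delta_n\to 0$. For the cost: every atom of $\beta^n$ lies in a cube $Q^n_{\mathbf i}$ meeting $\supp\beta$, so it is within distance $\delta_n$ of a point of $\supp\beta$, and by uniform continuity of $c$ on $K$ we get $C_\infty[\beta^n]\le \sup_{\supp\beta}c + \omega_c(\delta_n) = C_\infty[\beta]+o(1)$, using the representation $\gamma\text{-}\esup c=\sup\{c(v):v\in\supp\gamma\}$ from the lemma in Section 2. This gives $\limsup_n C_\infty[\beta^n]\le C_\infty[\beta]=\mathcal F(\beta)$.

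\textbf{Main obstacle.} The delicate point is the $\Gamma$-$\limsup$ step: one must produce a recovery sequence whose marginals are \emph{exactly} $\mu^{k,n}$, not merely close to them, so that the competitors stay in the correct constraint set $\Pi(\mu^{1,n},\ldots,\mu^{N,n})$ (where $\mathcal F_n$ is finite). The resolution is that the naive cube-discretization of $\beta$ automatically matches the block masses $\mu^k(B^{k,n}_i)$, and within a single block $B^{k,n}_i$ the points $x^{k,n}_{\mathbf i}$ carrying $\mu^{k,n}$-mass can be freely permuted because they all lie in the same small set; combining these permutations (one per block, per coordinate) via Lemma \ref{permutations} rearranges the discretized $\beta$ into a genuine element of $\Pi(\mu^{1,n},\ldots,\mu^{N,n})$ without moving any atom by more than $\delta_n$, so neither the tight limit nor the cost estimate is affected. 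The $\Gamma$-$\liminf$ inequality and equi-coercivity are then routine consequences of the already-established lower semicontinuity of $C_\infty$ and of the compact-support hypothesis.
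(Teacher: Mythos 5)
Your equi-coercivity and $\Gamma$-$\liminf$ arguments are fine and essentially the paper's (the paper treats the $\liminf$ by cases on whether $\mathcal F(\beta)$ is finite, while you deduce $\beta\in\Pi(\mu^1,\ldots,\mu^N)$ from $\mu^{k,n}\wstar\mu^k$ and then invoke the lower semicontinuity of $C_\infty$; both routes work, and the convergence $\mu^{k,n}\wstar\mu^k$ is used by the paper as well). The genuine gap is in the $\Gamma$-$\limsup$ step, exactly the point you flag as the main obstacle. The recovery sequence you propose -- one Dirac mass of weight $\beta(Q^n_{\mathbf i})$ at a point of $Q^n_{\mathbf i}\cap\supp\beta$ -- does not have marginals $\mu^{k,n}$, and the repair you invoke cannot produce them. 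What you actually know is only that the block totals agree: your discretized $\beta$ gives each block $B^{k,n}_i$ the mass $\mu^k(B^{k,n}_i)=\mu^{k,n}(B^{k,n}_i)$. Inside a block, however, $\mu^{k,n}\restr{B^{k,n}_i}$ is a specific discrete measure, with atoms at the coordinates $x^{k,n}_{\mathbf i}$ of the chosen points $z^n_{\mathbf i}\in\supp\gamma$ and weights coming from $\gamma(Q^n_{\mathbf i})$, whereas your discretization has atoms at other points with the different weights $\beta(Q^n_{\mathbf i})$; even the number of atoms per block can differ. No permutation or relabelling of atoms can equate two discrete measures with unequal weights, and Lemma \ref{permutations} does not apply: it presupposes two integer-coefficient discrete measures that already have the same marginals and only then produces the permutations -- it is not a device for manufacturing a coupling with prescribed discrete marginals out of a measure that merely has the correct block totals. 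The rational-approximation trick from Proposition \ref{icmfinitelyoptimal} does not rescue this either, since the discrepancy lies in the weights and atom locations themselves, not in their rationality.

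The missing idea is to redistribute rather than permute: within each cube $Q^n_{\mathbf i}$ with $\beta(Q^n_{\mathbf i})>0$, spread the mass $\beta(Q^n_{\mathbf i})$ according to the product of the normalized restrictions of the discrete marginals, i.e.
\[
\beta^n=\sum_{\mathbf i\in J^n}\beta(Q^n_{\mathbf i})\,
\frac{\mu^{1,n}\restr{B^{1,n}_{i_1}}}{\mu^1(B^{1,n}_{i_1})}\otimes\cdots\otimes
\frac{\mu^{N,n}\restr{B^{N,n}_{i_N}}}{\mu^N(B^{N,n}_{i_N})},
\]
whose marginals are exactly $\mu^{1,n},\ldots,\mu^{N,n}$ precisely because of the block identity of Proposition \ref{discrecompact}; this is the paper's construction. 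Once $\beta^n$ is defined this way, the rest of your $\limsup$ argument is correct and coincides with the paper's: $\spt\beta^n$ is contained in cubes meeting $\supp\beta$, hence within $\tfrac12\delta_n$ of $\supp\beta$, so uniform continuity of $c$ on the compact set $K$ gives $\limsup_n C_\infty[\beta^n]\le C_\infty[\beta]$, and $\beta^n\wstar\beta$ follows as in Proposition \ref{discretegamma}.
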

\begin{proof}
Let  $\beta\in\mathcal{P}(X)$. We recall that we need to prove the following: 
\begin{equation*}
\forall (\beta^n)_n\wstar\beta \text{ in }\mathcal{P}(X), \ 
\liminf_{n\to\infty}\mathcal{F}_n(\beta^n)\ge \mathcal{F}(\beta).\tag{I}
\end{equation*}
\begin{equation*}
\exists (\beta^n)_n \wstar \beta \text{ in } \mathcal{P}(X)
\text{ s.t. } \limsup_{n\to\infty}\mathcal{F}_n(\beta^n)\le \mathcal{F}(\beta).\tag{II}
\end{equation*} 
If $\mathcal F[\beta]<+\infty$, the $\Gamma$-$\liminf$ inequality (Condition (I)) follows from the lower-semicontinuity of the functional $C_\infty$. 
If $\mathcal{F}[\beta]=+\infty$, then either $\beta\notin\Pi(\mu^1,\ldots,\mu^N)$ or $C_\infty (\beta)=+\infty$. In the first case, since $\beta^n \wstar \beta$ and  
$\mu^{i,n}\wstar \mu^i$ for $i=1, \dots, N$, there exists $n_0\in\N$ such that  $\beta^n\notin \Pi(\mu^{1,n},\ldots,\mu^{N,n})$ for all $n\ge n_0$. Hence 
$\mathcal{F}_n[\beta^n]=+\infty$ for all $n\ge n_0$. If $C_\infty (\beta)=+\infty$  then let $M>0$ and let $\bf x \in \spt \beta$  and $r>0$ be such that $B({\bf x}, r) \subset \{c>M-\varepsilon\}$. 
Since the evaluation on open sets is lower semi-continuous with respect to the tight convergence we have that, for $n$ big enough,  $\beta_n (B({\bf x}, r))>0$ so that 
$C_\infty (\beta_n)>M-\varepsilon$ and since $M$ is arbitrary we conclude. 

For the $\Gamma$-$\limsup$ inequality  (Condition (II)), if $\mathcal{F}[\beta]=+\infty$, then any sequence with the right marginals and tightly converging to $\beta$ will do. Therefore, we may assume that the measure $\beta$ satisfies $\beta\in\Pi(\mu^1,\ldots,\mu^N)$ and $C_\infty[\beta]<+\infty$. 
To build the approximants, we use the Borel partitions $\{B_i^{k,n}\}_{i=1}^{m^{k.n}}$ and discrete measures introduced in Section \ref{dyaddecomp}.
For all $n$, given a multi-index $\iub=(i_1,\ldots,i_N)$ we use, again, the "cube"
\[
Q_\iub^n:=B_{i_1}^{1,n}\times\cdots\times B_{i_N}^{N,n} \]
and set 
\[J^n:=\{\iub \ | \ \beta (Q^n_\iub) >0\}.\]
We then define the measures
\[
\beta^n=\sum_{\iub \in J^n}  \beta(Q_\iub^n) \frac{\mu^{1,n} \restr{B^{1,n}_{i_1}} }{\mu^1 (B^{1,n}_{i_1})}\otimes \dots \otimes  \frac{\mu^{N,n} \restr{B^{N,n}_{i_N}}}{\mu^N (B^{N,n}_{i_N})}.
\]
We show that $\beta^n$ has marginals $\mu^{1,n},\ldots,\mu^{N,n}$. For all Borel sets $A\subset X_1$ we have 
\begin{align} 
\beta^n\left(A\times\prod_{k=2}^N X_k\right)&=\sum_{\jj\in J^n}\beta (Q_\jj^{n})\frac{\mu^{1,n}_{|B_{j_1}^{1,n}}(A)}{\mu^{1,n}(B_{j_1}^{1,n})}\nonumber \\
&=\sum_{j_1\in\pi^1(J^n)}\frac{\mu^{1,n}_{|B_{j_1}^{1,n}}(A)}{\mu^{1}(B_{j_1}^{1,n})}\sum_{\{(j_2,\ldots,j_N)~|~\jj\in J^n\}}\beta(Q_\jj^n)\nonumber\\
&=\sum_{j_1\in\pi^1(J^n)}\frac{\mu^{1,n}_{|B_{j_1}^{1,n}}(A)}{\mu^{1}(B_{j_1}^{1,n})}\mu^1(B_{j_1}^{1,n})\nonumber \\
&=\sum_{j_1\in\pi^1(J^n)}\mu^{1,n}_{|B_{j_1}^{1,n}}(A)=\mu^{1,n}(A).
\end{align}
where the third inequality is due to Proposition \ref{discrecompact}. The computation is analogous for the other marginals.

The sequence $(\beta^n)$ converges tightly to $\beta$ which can be seen in a manner analogous to the convergence of the sequence $(\alpha^n)$ to $\gamma$.  It remains to prove that the sequence  satisfies the $\Gamma$-$\limsup$ inequality.  
We fix $\varepsilon>0$. It suffices to show that
\[\limsup_{n\to\infty}C_\infty[\beta^n]\le C_\infty[\beta]+\varepsilon.\]
Since for all $n$ the support of $\beta^n$ is a finite set, we can fix $u^n\in\supp\beta^n$ such that $C_\infty[\beta^n]=c(u^n)$. 
Moreover for all $n$ there exists $z^n\in\supp\beta$ such that $d(u^n,z^n)\le\tfrac12\delta_n$.
Now for all $n$ large enough to satisfy $\varepsilon_n<\varepsilon$ we have 
\[C_\infty[\beta^n]=c(u^n)\le c(z^n)+\varepsilon_n\le C_\infty[\beta]+\varepsilon_n<C_\infty[\beta]+\varepsilon\]
and we are done. 

By Corollary \ref{precompatto},  $\Pi (\mu^1, \dots, \ \mu^N)\cup_n \Pi (\mu^{1,n}, \dots, \ \mu^{N,n})$ is compact and therefore the  equi-coercivity follows.
\end{proof}

\subsection{The integral case.}\label{integral}
We define the functionals $\mathcal{G}_n,\mathcal{G}:\mathcal{P}(X)\to \R \cup\{+\infty\}$ by 
\[
\mathcal{G}_n(\beta)=
\begin{cases}
C[\beta]&\text{ if }\beta\in\Pi(\mu^{1,n},\ldots,\mu^{N,n}),\\
+\infty&\text{ otherwise; }
\end{cases}
\]
and
\[
\mathcal{G}(\beta)=
\begin{cases}
C[\beta]&\text{ if }\beta\in\Pi(\mu^1,\ldots,\mu^N),\\
+\infty&\text{ otherwise.}
\end{cases}
\]
For this integral case we assume that the measures $\mu^1,\ldots,\mu^N$ have compact supports and that the cost function $c:X\to\R$ is continuous. 
We prove the following:
\begin{proposition}\label{gammaconvint} The functionals $\mathcal{G}_n $ are equi-coercive and 
\begin{equation}\label{eq:gammaclaimint} 
\mathcal{G}_n \stackrel{\Gamma}{\rightarrow}\mathcal{G}.
\end{equation}
\end{proposition}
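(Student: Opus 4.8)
The plan is to retrace, almost verbatim, the proof of Proposition~\ref{gammaconv}, with the functional $C$ in the role previously played by $C_\infty$; the equi-coercivity part and the handling of the marginal constraint are literally the same, and the only genuinely new step is the cost estimate along the recovery sequence, which now becomes an integral estimate. For equi-coercivity: by Corollary~\ref{precompatto} the set $\Pi(\mu^1,\dots,\mu^N)\cup\bigcup_n\Pi(\mu^{1,n},\dots,\mu^{N,n})$ is compact (for each $i$ the marginals $\mu^{i,n}$ together with their tight limit $\mu^i$ form a tight family), and since $\Pi(\mu^{1,n},\dots,\mu^{N,n})\ne\emptyset$ and $C$ is finite on it, $\inf_X\mathcal{G}_n$ is attained inside this fixed compact set, for every $n$.

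For the $\Gamma$-$\liminf$ inequality I would fix $(\beta^n)_n\wstar\beta$. If $\mathcal{G}(\beta)<+\infty$, then $\beta\in\Pi(\mu^1,\dots,\mu^N)$, and $\liminf_n\mathcal{G}_n(\beta^n)\ge\mathcal{G}(\beta)$ follows from the lower semi-continuity of $C$ with respect to the tight convergence: along a subsequence realizing the $\liminf$ one may discard the indices where $\mathcal{G}_n(\beta^n)=+\infty$ (if there are infinitely many of them the $\liminf$ is $+\infty$ and there is nothing to prove), and then $\mathcal{G}_n(\beta^n)=C[\beta^n]$, so lower semi-continuity applies. If $\mathcal{G}(\beta)=+\infty$, observe that, $c$ being real-valued and continuous hence bounded on the compact set $K:=\prod_{k=1}^N\supp\mu^k$, the value $C$ is always finite, so $\mathcal{G}(\beta)=+\infty$ forces $\pi^i_\sharp\beta\ne\mu^i$ for some $i$; since $\pi^i_\sharp\beta^n\wstar\pi^i_\sharp\beta$ while $\mu^{i,n}\wstar\mu^i$ (Proposition~\ref{discretegamma}), the equality $\pi^i_\sharp\beta^n=\mu^{i,n}$ cannot hold for infinitely many $n$, so $\beta^n\notin\Pi(\mu^{1,n},\dots,\mu^{N,n})$ and $\mathcal{G}_n(\beta^n)=+\infty$ for all large $n$. (The auxiliary subcase ``$C=+\infty$'' present in the $\sup$ proof simply does not arise here.)

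For the $\Gamma$-$\limsup$ inequality, if $\mathcal{G}(\beta)=+\infty$ the constant sequence $\beta^n\equiv\beta$ does the job. Otherwise $\beta\in\Pi(\mu^1,\dots,\mu^N)$, and I would take exactly the recovery sequence built in Proposition~\ref{gammaconv},
\[
\beta^n=\sum_{\iub \in J^n}  \beta(Q_\iub^n)\, \frac{\mu^{1,n} \restr{B^{1,n}_{i_1}} }{\mu^1 (B^{1,n}_{i_1})}\otimes \dots \otimes  \frac{\mu^{N,n} \restr{B^{N,n}_{i_N}}}{\mu^N (B^{N,n}_{i_N})},\qquad J^n=\{\iub \mid \beta (Q^n_\iub) >0\},
\]
which, as shown there, has marginals $\mu^{1,n},\dots,\mu^{N,n}$ (so $\mathcal{G}_n(\beta^n)=C[\beta^n]$) and satisfies $\beta^n\wstar\beta$. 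It then remains to prove $\limsup_n C[\beta^n]\le C[\beta]$. Here I would use that $\beta$ — being a coupling of compactly supported marginals — and $\beta^n$ are both concentrated on $K$, that the cubes $Q^n_\iub$ are pairwise disjoint with union $K$ so that $\sum_{\iub\in J^n}\beta(Q^n_\iub)=1$, and that, for each $\iub\in J^n$, the restriction $\beta^n\restr{Q^n_\iub}$ is supported in $Q^n_\iub$ with the same mass $\beta(Q^n_\iub)$ as $\beta\restr{Q^n_\iub}$. Since $\mathrm{diam}(Q^n_\iub)<\tfrac12\delta_n$ in the $\sup$ metric and, by the choice of $\delta_n$ in Section~\ref{dyaddecomp}, $c$ oscillates by less than $\varepsilon_n$ on subsets of $K$ of diameter $<\delta_n$, for any $w_\iub\in Q^n_\iub$ one gets $\bigl|\int_{Q^n_\iub}c\d\beta^n-c(w_\iub)\beta(Q^n_\iub)\bigr|\le\varepsilon_n\beta(Q^n_\iub)$ and $\bigl|\int_{Q^n_\iub}c\d\beta-c(w_\iub)\beta(Q^n_\iub)\bigr|\le\varepsilon_n\beta(Q^n_\iub)$; summing over $\iub\in J^n$ gives $|C[\beta^n]-C[\beta]|\le 2\varepsilon_n\to 0$, in particular $C[\beta^n]\to C[\beta]$, which is more than enough.

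The only real obstacle is this last estimate, the exact integral counterpart of the $C_\infty$ estimate in Proposition~\ref{gammaconv}: it works because the recovery measure $\beta^n$ reproduces, cube by cube, the masses $\beta(Q^n_\iub)$ — a fact guaranteed by the construction of $\beta^n$ together with Proposition~\ref{discrecompact} — after which the uniform continuity of $c$ on the compact set $K$ closes the argument with no further difficulty.
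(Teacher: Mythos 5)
Your proof is correct, and its skeleton (equi-coercivity via Corollary \ref{precompatto}, the $\Gamma$-$\liminf$ via lower semi-continuity of $C$ plus the marginal-mismatch argument, and the reuse of the recovery sequence $\beta^n$ from Proposition \ref{gammaconv}) coincides with what the paper does; the one place where you genuinely diverge is the proof that the costs converge along the recovery sequence. The paper handles this softly: since $\spt\beta^n\subset K:=\spt\mu^1\times\cdots\times\spt\mu^N$ and $K$ is compact, it sets $T=\max_K c$, replaces $c$ by the bounded continuous truncation $c_T=\min\{c,T\}$, and concludes $C[\beta^n]\to C[\beta]$ directly from the tight convergence $\beta^n\wstar\beta$ already established in Proposition \ref{gammaconv}. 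You instead run the quantitative, cube-by-cube argument: using that $\beta^n$ and $\beta$ assign the same mass $\beta(Q^n_\iub)$ to each cube (via Proposition \ref{discrecompact}) and that $c$ has oscillation $<\varepsilon_n$ on sets of diameter $<\delta_n$ in $K$, you get $|C[\beta^n]-C[\beta]|\le 2\varepsilon_n$. Both arguments are valid; the paper's is shorter and only needs continuity of $c$ and the already-proved weak$^\ast$ convergence, while yours is the exact integral analogue of the $C_\infty$ estimate in Proposition \ref{gammaconv}, is self-contained at that step (it does not invoke $\beta^n\wstar\beta$ to compare costs), and yields an explicit rate of convergence, which is more than the $\Gamma$-$\limsup$ inequality requires.
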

\begin{proof}
The proof is analogous to that of Proposition \ref{gammaconv}.  The only substantial difference is in the proof of the $\Gamma$-$\limsup$ inequality in the case that the measure $\beta$ belongs to the set $\Pi(\mu^1,\ldots,\mu^N)$. We have to find a sequence $(\beta^n)$, weakly${}^\ast$-converging to $\beta$ and satisfying Condition  (II). Let  ($\beta^n$) be the discretization defined in the proof of Proposition \ref{gammaconv}. Since the supports of the measures $\mu^1,\ldots,\mu^N$ are compact, also the set $K:=\spt\mu^1\times\cdots\times \spt\mu^N$ is compact. Note that for all $n\in\N$ we have $\spt\beta^n\subset K$. We set 
$T=\max_{z\in K}c(z)$. Now the function $c_T:=\min\{c,T\}$ is continuous and bounded on $X$, and by the weak${}^\ast$-convergence
\[\mathcal{G}(\beta^n)=\int_Xcd\beta^n=\int_X c_Td\beta^n\to\int_Xc_Td\beta=\int_Xcd\beta=\mathcal{G}[\beta],\]
from which the $\Gamma$-$\limsup$ inequality follows. 
\end{proof}

\subsection{Proof of the main theorems and a counterexample.}
\begin{proof}{(of Theorem \ref{icmoptimal})}
By Proposition \ref{discretegamma} and Remark \ref{discretegamma2} we can find a sequence $(\alpha^n)_n$ with finite supports such that $\spt \alpha^n \subset \spt \gamma$ and $\alpha^n \wstar \gamma$.
We define the functionals $\mathcal{F}$ and $\mathcal{F}_n$ of Subsection \ref{sup} using the marginals of $\gamma$ and $\alpha^n$.  
The plan $\gamma$ is ICM, therefore by Proposition \ref{icmfinitelyoptimal} it is finitely optimal. This means that each plan $\alpha^n$ is optimal between its marginals and thus a minimiser of the functional $\mathcal{F}_n$. 

The $\Gamma$-convergence and equi-coercivity established in Proposition \ref{gammaconv} imply,  by Theorem \ref{convminima}, that the minimisers of the functionals $\mathcal{F}_n$ converge, up to subsequences, to a minimiser of $\mathcal F$. Therefore, since $\alpha^n\wstar\gamma$, the plan $\gamma$ is optimal for the problem ($P_\infty$). 
 \end{proof}

\begin{proof}{(of Theorem \ref{ccmoptimal})}
The proof is the same as that of Theorem \ref{icmoptimal}. The $\Gamma$-convergence is now given by Proposition \ref{gammaconvint}. 
\end{proof}

In \cite{Ambrosio}, Ambrosio and Pratelli showed that for the problem ($P$) the continuity of the cost function $c:X\times X\to[0,\infty]$ is necessary to guarantee the optimality of a cyclically monotone transport plan. The next example, that is a slightly modified version of theirs, shows that also in the case of the problem ($P_\infty$) the continuity of the cost is required,  even when the cost assumes only finite values. 
\begin{example}
	Let us consider the two-marginal $L^\infty$-optimal transportation problem with marginals $\mu=\nu=\mathcal{L}|_{[0,1]}$ and the cost function
	\[c(x,y)=
	\begin{cases} 
	1&\text{ if }x=y\\
	2&\text{ otherwise}
	\end{cases}.\]
\end{example}
We fix an irrational number $\alpha$. We set $T_1=Id_{[0,1]}$ and $T_2:[0,1]\to[0,1]$, $T_2(x)=x+\alpha\pmod 1$. Now $T_1$ is an optimal transportation map for the problem ($P_\infty$) with $C_\infty[T_1]=1$. Since $C_\infty[T_2]=2$, $T_2$ cannot be optimal. However, it is ICM. 

In fact if we assume that $T_2$ is not ICM, we should find a minimal $K\in\N$ and a $K$-tuple of couples $\{x_i,y_i\}_{i=1}^K $, all belonging to the support of the plan given by $T_2$, such that 
\begin{equation*}
\max_{1\le i\le K}c(x_i,y_i)>\max_{1\le i\le K}c(x_{i+1},y_i),
\end{equation*}
with the convention $x_{K+1}=x_1$. 
By the definition of the map $T_2$ we have $y_i=x_i+\alpha\pmod 1$ for all $i$. 
Given the form of $c$, the only form in which this inequality can hold is: $2>1$. The right-hand side  now tells us that $y_i=x_i+\alpha\pmod 1$ for all $i$, that is, $x_{i+1}=x_i+\alpha\pmod 1$ for all $i$. Summing up now gives us (keeping in mind that $x_{K+1}=x_1$) that $x_1=x_1+K\alpha\pmod 1$, contradicting the irrationality of $\alpha$.


\bibliographystyle{plain}

\end{document}